\documentclass{birkjour}
\usepackage{hyperref}
\usepackage{lmodern}  
\usepackage[T1]{fontenc}  
\usepackage{fix-cm}
\usepackage{bm}
 \usepackage{mathabx}
 
 \usepackage{amsthm}
 \usepackage{accents}
 \usepackage{amsmath}
 \usepackage[dvipsnames]{xcolor}
 \usepackage{xcolor}
 \usepackage{mathrsfs}
 \newcommand{\nc}{\mathbb{N}}

\usepackage{hhline}
\newtheorem{theorem}{Theorem}[section]
\newtheorem{defin}[theorem]{Definition}

\newtheorem{rem}{Remark}[section]
\newtheorem{pro}{Proposition}[section]

\newenvironment{defn*}{\begin{definition}}{\end{definition}}

 \numberwithin{equation}{section}

\newcommand{\doubleoverline}[1]{\overline{\overline{#1}}}
\begin{document}

%
%
%
%
%
%
%
%
%

\title[Tame Factorization Property II]
 {Tame Factorization Property II}


\author{Buket Can Bahadır}
\email{canbuket@gmail.com}
\author{Nazl\i\;Do\u{g}an}
\address{Fatih Sultan Mehmet Vakif University, 
    34445 Istanbul, Turkey}
\email{ndogan@fsm.edu.tr}

\subjclass{46A04, 46A45, 46A61, 46A63}

\keywords{Fréchet Spaces, Tame Operators, Topological Invariants}

\dedicatory{Dedicated to the memory of Tosun Terzio\u{g}lu \\ on the 10th anniversary of his passing.}

\begin{abstract}
We investigate the relationship between the tame factorization property, denoted by $\mathfrak{TF}$ and introduced in the companion paper \cite{CDI}, and the DN-$\Omega$ type linear topological invariants of Fr\'echet spaces. Combining the basic properties of $\mathfrak{TF}$ with known characterizations of tameness and boundedness, we obtain several results identifying the triples of Fr\'echet spaces that possess $\mathfrak{TF}$. We further exhibit examples showing that 
tame factorization property is a strictly weaker condition than tameness — indeed, we construct triples possessing $\mathfrak{TF}$ none of whose individual pairs are tame. We then investigate triples consisting of an arbitrary Fr\'echet space $X$, a nuclear Fr\'echet space $Y$ satisfying the properties $\underline{DN}$ and $\Omega$, and a power series space of finite type $\Lambda_1(\mathcal{E})$ or infinite type $\Lambda_\infty(\mathcal{E})$. We show that requiring such a triple to possess the tame factorization property $\mathfrak{TF}$  characterizes the corresponding linear topological invariants of $X$; in some cases this holds without any restriction on $Y$, while in others it requires the coincidence of the approximate diametral dimension of $Y$ with that of $\Lambda_1(\mathcal{E})$ or $\Lambda_\infty(\mathcal{E})$.

\end{abstract}

\maketitle
\section{Introduction}

The question of when every continuous linear operator between two Fréchet spaces is bounded has a long history in the theory of locally convex spaces. Vogt \cite{vogt2} settled this problem by providing a complete characterization of all pairs $(E,F)$ of Fréchet spaces, denoted by $(E,F)\in \mathfrak{B}$, for which every continuous linear operator from $E$ to $F$ is bounded. In particular, he showed that whenever one of the spaces $E$ or $F$ is a power series space, boundedness of the pair $(E,F)$ is characterized by a suitable condition of DN-$\Omega$ type on the other space. Later, this concept was generalized to factorized operators. Building upon Zahariuta's classification of Cartesian products of Köthe spaces \cite{Z1, Z2}, Djakov et al. \cite{djakov1} introduced the \emph{bounded factorization property} $\mathfrak{BF}$ for a triple $(E,G,F)$, which requires all continuous linear operators from $E$ into $F$ factoring through $G$ to be bounded. A complete structural characterization of $\mathfrak{BF}$ for arbitrary Fréchet spaces was established by Terzioğlu and Zahariuta \cite{terzi2}.

Tameness offers an analogous, but genuinely different, regularity condition. Tameness requires that the characteristic of continuity of every operator between Fr\'echet spaces $E$ and $F$ be majorized by a single function, that is, there exists a nondecreasing function $S:\mathbb{N}\to \mathbb{N}$ such that for every continuous linear operator $T\in L(E,F)$, there exists $k_0 \in \mathbb{N}$ with
$$\hspace{1.85in}\pi_T(k)\leq S(k) \qquad \hspace{1.05in}\forall k\geq k_{0},$$
where $\pi_{T}$ is the characteristic of continuity of the operator $T$. The notion first appeared in Hamilton's 1982 work on linearly tame spaces \cite{hamil}, and was given its general definition for pairs of Fr\'echet spaces by Dubinsky and Vogt \cite{vogt1}. Since then, tameness has been studied, both for pairs of power series spaces and for pairs in which one of the spaces is an arbitrary Fr\'echet space \cite{A, can, nyberg, pisek2}. In particular, Piszczek \cite{pisek2} showed that, when one of the spaces is a power series space, tameness of the pair $(E,F)$ is characterized by the DN-$\Omega$ type linear topological invariants on the other space, in a manner similar to Vogt's \cite{vogt2} characterization of $\mathfrak{B}$. This shows that tameness parallels boundedness in the language of linear topological invariants.

In \cite{CDI}, we introduced the \emph{tame factorization property} $\mathfrak{TF}$ as the natural tame counterpart of $\mathfrak{BF}$: a triple $(E,G,F)$ has property $\mathfrak{TF}$ if every operator from $E$ to $F$ factoring through $G$ is tame. We gave a complete characterization of $\mathfrak{TF}$ for arbitrary Fr\'echet spaces and specialized it to triples where one or more of the spaces are K\"{o}the spaces. Furthermore, we proved that a triple of Köthe spaces satisfies the tame factorization property if and only if every operator factoring as a product of two quasi-diagonal operators is tame.

In the present paper, we investigate the relationship between tame factorization property and linear topological invariants, in the same spirit as Vogt's \cite{vogt2} and Piszczek's \cite{pisek2} characterizations of boundedness and tameness. In Section 3, we collect the fundamental properties of property $\mathfrak{TF}$—its behavior under quotients, subspaces. By combining these structural properties with the characterizations of boundedness and tameness due to Vogt and Piszczek, we derive various sufficient conditions for a triple to satisfy $\mathfrak{TF}$. We then exhibit triples possessing $\mathfrak{TF}$ none of whose constituent pairs are tame, showing that $\mathfrak{TF}$ is a genuinely new phenomenon rather than tameness. Section 4 contains our main results. We investigate triples consisting of an arbitrary Fr\'echet space $X$, a nuclear Fr\'echet space $Y$ satisfying properties $\underline{DN}$ and $\Omega$, and a power series space of finite type $\Lambda_1(\mathcal{E})$ or infinite type $\Lambda_\infty(\mathcal{E})$. Without any assumption on the approximate diametral dimension of $Y$, we show in Theorem~\ref{TA1} that $(X, Y, \Lambda_1(\mathcal{E})) \in \mathfrak{TF}$ already forces $(X,Y)$ to be tame, and in Theorem~\ref{TA1-1} that $(X,\Lambda_\infty(\mathcal{E}),Y) \in \mathfrak{TF}$ is equivalent to $(X,\Lambda_\infty(\mathcal{E}),Y) \in \mathfrak{BF}$, and to $X$ having the property $(LB^\infty)$. Under the coincidence $\delta(Y) = \delta(\Lambda_\infty(\mathcal{E}))$, Theorem~\ref{TA2} and the Theorem~\ref{TA22} show that tame and bounded factorization over $\Lambda_\infty(\mathcal{E})$ coincide and are equivalent to $X$ possessing $(LB^\infty)$. The most delicate case is the coincidence $\delta(Y) = \delta(\Lambda_1(\mathcal{E}))$, treated in Theorem~\ref{T1}: since $\Lambda_1(\mathcal{E})$ need not embed as a subspace of $Y$ in this case, we adapt the $r$-local imbedding technique developed by Aytuna \cite{A}, combined with Grothendieck's factorization theorem, to show that $(X,\Lambda_1(\mathcal{E}), Y) \in \mathfrak{TF}$ is equivalent to $X$ having property $\overline{\Omega}$, and that $X$ is isomorphic to a power series space of finite type exactly when $\underline{DN}$ holds on $X$ in addition.
\section{Preliminaries}

\label{chp:preliminaries}

In this section, we present the basic definitions, notation, and terminology that will be used throughout the paper. For further background and related results, we refer the reader to \cite{dub} and \cite{vogt4}.

\subsection{Fréchet Spaces}
A \textit{Fr\'echet space} is a complete Hausdorff locally convex space whose topology is defined by a countable family seminorms $(\|\cdot\|_{n})_{n\in \mathbb{N}}$. A \textit{grading} on a Fr\'echet space $E$ is an increasing sequence of seminorms $(\|\cdot\|_{n})_{n\in \mathbb{N}}$ that defines the topology of $E$. Every Fr\'echet space admits a grading and a Fréchet space together with a fixed grading is called a \textit{graded Fréchet space}.
We refer the reader to \cite{hamil} for further details.
In the sequel, we work exclusively with graded Fr\'echet spaces unless stated otherwise.

Let $E$ be a Fréchet space and let $E_n$ be the completion of the normed space $E / \ker \|\cdot\|_{n}$ with respect to the norm induced by $\|\cdot\|_{n}$. The space $E_n$ is called the \textit{local Banach space} associated with the seminorm $\|\cdot\|_{n}$ for each $n \in \mathbb{N}$.
For each $n\in \mathbb{N}$, the inclusion $\ker\|\cdot\|_{n+1}\subseteq \ker\|\cdot\|_{n}$ induces a natural continuous map $$i^{n+1}_{n}: E_{n+1} \to E_{n},$$ called \textit{the linking map}. Thus, \(E\) can be identified with the projective limit of the projective system \((E_n,i^{n+1}_{n})_{n\in\mathbb N}\). A Fréchet space $E$ is called a \emph{nuclear Fréchet space} if it is the projective limit of a sequence of separable Hilbert spaces $(E_n)_{n\in\mathbb{N}}$ whose linking maps $i^{n+1}_{n}:E_{n+1}\to E_n$ are Hilbert--Schmidt operators.

Power series spaces are the fundamental class of Fr\'echet spaces, serving as building blocks for the structure theory of more general Fr\'echet spaces, and they play an essential role in the present paper. For a detailed survey, see \cite{TTV}.

 Let $\alpha=\left(\alpha_{n}\right)_{n\in \mathbb{N}}$ be a non-negative increasing sequence with $\displaystyle \lim_{n\rightarrow \infty} \alpha_{n}=+\infty$. The \textit{power series space of finite type} is defined by
$$\Lambda_{1}\left(\alpha\right)=\left\{x=\left(x_{n}\right)_{n\in \mathbb{N}}: |x|_{r}=\left(\sum^{\infty}_{n=1}\left|x_{n}\right|^{2}e^{2r\alpha_{n}}\right)^{\frac{1}{2}}<+\infty \textnormal{ for all } r<1 \right\}$$
and the \textit{power series space of infinite type} is defined by
$$\displaystyle \Lambda_{\infty}\left(\alpha\right)=\left\{x=\left(x_{n}\right)_{n\in \mathbb{N}}: |x|_{r}=\left(\sum^{\infty}_{n=1}\left|x_{n}\right|^{2}e^{2r\alpha_{n}}\right)^{\frac{1}{2}}<+\infty \textnormal{ for all }r\in \mathbb{R}\right\}.$$
The sequence $\alpha$ is called \textit{exponent sequence}. The nuclearity of a power series spaces $\Lambda_{1}\left(\alpha\right)$ and $\Lambda_{\infty}\left(\alpha\right)$ is characterized by the conditions $\displaystyle \lim_{n\rightarrow \infty} \frac{\ln(n)}{\alpha_{n}}=0$ and $\displaystyle \sup_{n\in \mathbb{N}} \frac{\ln(n)}{\alpha_{n}}<+\infty$, respectively. An exponent sequence $\alpha$ is called \textit{finitely nuclear} if $\Lambda_{1}(\alpha)$ is nuclear.

An exponent sequence $\alpha$ is called
\begin{center}
\begin{itemize}
\item[]{}\textit{$\hspace{1in}$stable} $\hspace{.8in}$if $\hspace{0.25in}\displaystyle \sup_{n\in \mathbb{N}}\hspace{0.025in}\frac{\alpha_{\hspace{0.01in}2n}}{ \alpha_{n}}<+\infty$, ~\\~\\
	\item[]{}\textit{$\hspace{1in}$weakly-stable} $\hspace{0.3in}$ if $ \hspace{0.25in}\displaystyle \sup_{n\in \mathbb{N}}\hspace{0.025in}\frac{\alpha_{\hspace{0.01in}n+1}}{\alpha_{n}}<+\infty$. $\hspace{0.15in}$
\end{itemize}
\end{center}

\subsection{Diametral Dimension and Approximate Diametral Dimension}
Let $E$ be a Fréchet space and $U$ and $V$ be absolutely convex subsets of $E$ such that $U$ absorbs $V$, that is, $V\subseteq CU$ for some $C>0$, and  let $L$ be a subspace of $E$. We define
$$\delta\left(V, U, L\right)=\inf\left\{t>0: V\subseteq tU+L\right\}.$$
The $n^{th}$ \textit{Kolmogorov diameter} of $V$ with respect to $U$ is defined by
$$\hspace{1in} d_{n}\left(V,U\right)=\inf\left\{\delta\left(V, U, L\right): \dim L\leq n\right\}\hspace{0.35in}n=0,1,2,...$$
Let $(U_p)_{p\in\mathbb{N}}$ be a decreasing neighborhood basis at zero in the Fréchet space $E$. The \textit{diametral dimension} of $E$ is defined by
$$\Delta\left(E \right)= \left\{\left(t_{n}\right)_{n\in \mathbb{N}}: \forall p\in \mathbb{N} \hspace{0.1in} \exists \hspace{0.025in} q>p \hspace{0.1in}\lim_{n\rightarrow\infty} t_{n}d_{n}\left(U_{q},U_{p}\right)=0\right\},$$
and the \textit{approximate diametral dimension} of a Fr\'echet space $E$ is defined by
$$\delta\left(E \right)= \left\{\left(t_{n}\right)_{n\in \mathbb{N}}: \exists p\in \mathbb{N} \hspace{0.05in} \forall \hspace{0.025in} q>p \hspace{0.05in}\lim_{n\rightarrow\infty}\frac{ t_{n}}{d_{n}\left(U_{q},U_{p}\right)}=0\right\}.$$
Both the diametral dimension 
$\Delta(E)$ and the approximate diametral dimension $\delta(E)$  are independent of the choice of the basis of zero neighborhoods. For further details, properties, and applications of the diametral dimension and approximate diametral dimension, we refer the reader to \cite{BRP,DFW, ND1, M2, TTK}.

The diametral dimension and the approximate diametral dimension of power series spaces satisfy
\[
\begin{aligned}
\Delta\big(\Lambda_{1}(\alpha)\big) &= \Lambda_{1}(\alpha), 
& \qquad \Delta\big(\Lambda_{\infty}(\alpha)\big) &= \Lambda_{\infty}(\alpha)' , \\
\delta\big(\Lambda_{1}(\alpha)\big) &= \Lambda_{1}(\alpha)', 
& \qquad \delta\big(\Lambda_{\infty}(\alpha)\big) &= \Lambda_{\infty}(\alpha).
\end{aligned}
\]
Here, $\Lambda_1(\alpha)'$ and $\Lambda_\infty(\alpha)'$ denote the strong duals of the corresponding power series spaces.


\subsection{Operators on Fr\'echet Spaces}

A linear operator $T:E\to F$ is said to be \textit{continuous} if for every $k\in \nc$, there exist an $m_{k}\in \nc$ and a $C_{k}>0$ such that
$$\|T x\|_{k}\leq C_{k}\|x\|_{m_{k}}$$
for every $x\in E$.  We denote by $L(E,F)$ the space of all continuous linear operators from $E$ to $F$. 

An operator $T\in L(E,F)$ is said to be \textit{bounded} if there exists a neighborhood $U$ of zero in $E$ such that $T(U)$ is a bounded subset in $F$. We write $(E,F) \in \mathfrak{B}$ to mean that every continuous linear operator from $E$ into $F$ is bounded.

Let $r,k\in \mathbb{N}$ and $T\in L(E,F)$. We define
\[ \|T\|_{k,r} = \sup_{\|x\|_r\leq 1}\|Tx\|_k\]
for every $k,r \in \mathbb{N}$. While $\|T\|_{k,r}$ may fail to be finite for arbitrary $k,r \in \mathbb{N}$, since $T \in L(E,F)$, there exists a function $\phi: \mathbb{N} \to \mathbb{N}$ with $\|T\|_{k,\phi(k)} < \infty$ for every $k \in \mathbb{N}$.

The \textit{characteristic of  continuity map} of a linear operator $T$, $\pi_{T}:\nc\to\nc$, is defined as
$$\hspace{0.5in}\pi_T(k)= \inf\lbrace r\in \nc: \sup_{\|x\|_r\leq 1}\|Tx\|_k<\infty \rbrace= \inf_{r \in \nc}\|T\|_{k,r}, \hspace{0.5in}\forall \;k\in \mathbb{N}.$$
We remark that an operator $T$ is bounded precisely when its characteristic of continuity map is bounded. 

\subsection{Tame Spaces}

Generally, no a priori estimate exists for the characteristic of continuity of a continuous linear operator. Tameness of an operator offers a mechanism to control the unpredictable behavior of characteristic of continuity, defined as follows:

\begin{defin}
Let $E$ and $F$ be Fr\'echet spaces, and $T \in L(E,F)$.   $T$ is called $S$-tame for a nondecreasing function $S:\mathbb{N}\to\mathbb{N}$ if there exists a $k_0 \in \mathbb{N}$ such that
$$\hspace{1.8in}\pi_T(k)\leq S(k),\hspace{1.45in} \forall k\geq k_{0}.$$ We call $T$ linearly tame if $S$ is linear.
\end{defin}

Such phenomena occur naturally for operators between power series spaces. Vogt \cite{vogt5} established that every continuous linear operator between finite type power series spaces is linearly tame.

\begin{defin} Let $E$ and $F$ be Fr\'echet spaces. The pair $(E,F)$ is called tame, denoted by $(E,F)\in \mathfrak{T}$ if there exists a nondecreasing function $S:\mathbb{N}\to \mathbb{N}$ such that for every continuous linear operator $T\in L(E,F)$,  there exists $k_0 \in \mathbb{N}$ with
$$\hspace{1.85in}\pi_T(k)\leq S(k),\hspace{1.45in} \forall k\geq k_{0}.$$ 
If $E=F$, the space $E$ is called tame.
Moreover, if $\mathcal{U}\subset L(E,F)$ is a set of continuous linear operators and every $T\in \mathcal{U}$ satisfies the above condition, then we say that the set $\mathcal{U}$ is \emph{tame}.
\end{defin}
 The second author \cite{ND3,ND4} proved that the families of Toeplitz operators between power series spaces are $S$-tame for a suitable function $S$, whereas the families of Hankel operators are compact. Consequently, both families are $S$-tame.

\begin{rem}

\begin{itemize}
\item[i.] The definition of tameness is independent of the choice of seminorms on $E$ and $F$.
\item[ii.] The tameness of a pair $(E,F)$ is also characterized by the following equivalent condition:  
there exists a sequence of nondecreasing functions $(S_\alpha)_{\alpha\in\mathbb{N}}$, such that for every $T\in L(E,F)$ there exists an $\alpha\in \nc$ such that
$$\hspace{1.85in}\pi_T(k)\leq S_{\alpha}(k),\hspace{1.45in} \forall k\in \nc.$$ 
\end{itemize}
\end{rem}

\subsection{DN-$\bf\Omega$ Type Invariants}

In this subsection, we recall several linear topological invariants of type $\text{DN}$ and $\Omega$. Vogt and Wagner \cite{V1,V2,V3} introduced and employed the diametral dimension and DN-$\Omega$ type linear topological invariants precisely in order to understand the subspaces and quotient spaces of nuclear stable power series spaces. These invariants also serve as crucial tools for characterizing the tameness and boundedness of Fr\'echet space pairs.

\begin{defin}
A Fr\'echet space $(E, \left\|.\right\|_{k})_{k\in \mathbb{N}}$ is said to have the property:
\begin{itemize}
\item[\color{white}.] $\hspace{-0.35in}$ \textnormal{(\textbf{DN})} if there exists a $p\in \mathbb{N}$, so that for each $k\in \mathbb{N}$ an $n\in \mathbb{N}$ and a $C>0$ exist such that
$$\hspace{1.55in} \|x\|^{2}_{k}\leq C\|x\|_{p}\|x\|_{n}\hspace{1.55in}\forall x\in E,$$
\item[\color{white}.] $\hspace{-0.35in}$ {$\bf{(\underline{DN})}$} if there exists a $p\in \mathbb{N}$, so that for each $k\in \mathbb{N}$ an $n\in \mathbb{N}$, $0<\lambda<1$ and $C>0$ exist such that
$$\hspace{1.55in} \|x\|_{k}\leq C\|x\|^{1-\lambda}_{p}\|x\|^{\lambda}_{n}\hspace{1.45in}\forall x\in E,$$
\item[\color{white}.] $\hspace{-0.3in}$ $\bf{(LB_{\infty})}$ if for every positive nondecreasing sequence $\rho_{n}$ which tends to infty, there exists a $p\in \mathbb{N}$, so that for each $k\in \mathbb{N}$ an $n_{0}\in \mathbb{N}$ and $C>0$ exist such that
$$\hspace{1.55in} \|x\|^{1+\rho_{n}}_{k}\leq C\|x\|^{\rho_{n}}_{p}\|x\|_{n}\hspace{1.3in}\forall x\in E$$
for every $k\leq n\leq n_{0}$.
\end{itemize}
\end{defin}
The following relationships hold between these invariants (see \cite{vogt4}, \cite{vogt2}):
$$LB_{\infty}\hspace{0.1in}\Rightarrow \hspace{0.1in} DN \hspace{0.1in} \Rightarrow \hspace{0.1in} \underline{DN}.$$

Power series spaces of finite type satisfy the property $\underline{DN}$, while power series spaces of infinite type satisfy the property $DN$ (see \cite[Lemma 29.2 and Lemma 29.12]{vogt4})
\begin{defin} A Fr\'echet space $(E, \left\|.\right\|_{k})_{k\in \mathbb{N}}$ is said to have the property:
\begin{itemize}
\item[\color{white}.] $\hspace{-0.35in}$ $\bf{(\Omega)}$ if for every $p\in \mathbb{N}$, there exists a $q\in \mathbb{N}$ so that for every $k\in \mathbb{N}$ there exist ~\\ {\color{white}.}$ \hspace{-.1in}$ a $0<\theta<1$ and a $C>0$ satisfying
$$\hspace{1.45in}\|y\|^{*}_{q}\leq C(\|y\|^{*}_{p})^{1-\theta} (\|y\|^{*}_{k})^{\theta} \hspace{1.2in}\forall y\in E^{\prime},$$
\item[\color{white}.] $\hspace{-0.35in}$ $\bf{(\overline{\Omega})}$ if for every $p\in \mathbb{N}$, there exists a $q\in \mathbb{N}$ so that for every $k\in \mathbb{N}$ there exist ~\\ {\color{white}.} $\hspace{-.15in}$ a $C>0$ satisfying
$$\hspace{1.45in}(\|y\|^{*}_{q})^{2}\leq C\|y\|^{*}_{p} \|y\|^{*}_{k} \hspace{1.45in}\forall y\in E^{\prime},$$

\item[\color{white}.] $\hspace{-0.35in}$ $\bf{(\doubleoverline{\Omega})}$ if for every $p\in \mathbb{N}$, there exists a $q\in \mathbb{N}$ so that for every $k\in \mathbb{N}$ and $\varepsilon>0$ ~\\ {\color{white}.} $\hspace{-.15in}$ a $C>0$ satisfying
$$\hspace{1.45in}(\|y\|^{*}_{q})^{1+\varepsilon}\leq C(\|y\|^{*}_{p})^{\varepsilon} \|y\|^{*}_{k} \hspace{1.15in}\forall y\in E^{\prime},$$

\item[\color{white}.] $\hspace{-0.35in}$ $\bf{(LB^{\infty})}$  for every positive nondecreasing sequence $\rho_{n}$ which tends to  infinity  and for every $p\in \mathbb{N}$, there exists a $q\in \mathbb{N}$ so that for every $k\in \mathbb{N}$ there  exist an $n_{0}\in \mathbb{N}$ and a $C>0$ satisfying 
$$\hspace{1.45in}(\|y\|^{*}_{q})^{1+\rho_{n}}\leq C(\|y\|^{*}_{p})^{\rho_{n}} \|y\|^{*}_{k} \hspace{1.05in}\forall y\in E^{\prime},$$
for every $k\leq n\leq n_{0}$.
\end{itemize}
The following implications between these invariants are known (see \cite{vogt2})
$$\doubleoverline{\Omega}\hspace{0.1in}\Rightarrow \hspace{0.1in}\overline{\Omega} \hspace{0.1in}\Rightarrow \hspace{0.1in} LB^{\infty}\hspace{0.1in}\Rightarrow \hspace{0.1in} \Omega.$$
\end{defin}

We note that power series spaces of infinite type satisfy the property $\Omega$, whereas those of finite type satisfy $\overline{\Omega}$ (see \cite[Lemma 29.11 and Lemma 29.12]{vogt4}).


Vogt \cite{vogt2} proved that, if either $E$ or $F$ is a power series space, then $(E,F)\in\mathfrak{B}$ if and only if the other space satisfies an appropriate DN or $\Omega$ type condition. Similarly, Piszczek \cite{pisek2} characterized the tameness of the pair
$(E,F)$ in the case where one of the spaces is a power series space, showing that
$(E,F)$ is tame if and only if the other space satisfies the corresponding topological invariants. Therefore, these invariants play a fundamental role in our study. For the reader's convenience, we present a concise summary of these results below; for a more detailed discussion, we refer to \cite{pisek2,vogt2}.

\begin{theorem}\label{VP} Let $\alpha$ be a weakly-stable sequence and let $E$ be a Fréchet space.
\begin{itemize}
\item[\color{white}.]{\color{white}.}$\hspace{-0.4in}$
\textnormal{1.} \textnormal{(\cite[5.2 Satz]{vogt2} and \cite[Theorem 4.2]{pisek2})}
$$(E,\Lambda_{\infty}(\alpha))\in \mathfrak{T} \hspace{0.1in} \Leftrightarrow \hspace{0.1in} (E,\Lambda_{\infty}(\alpha))\in \mathfrak{B} \hspace{0.1in} \Leftrightarrow \hspace{0.1in} \text{ E has } (LB^{\infty}) .$$
\item[\color{white}.]{\color{white}.}$\hspace{-0.4in}$ \textnormal{2.} \textnormal{(\cite[3.2 Satz]{vogt2} and \cite[Theorem 4.10]{pisek2})}
$$(\Lambda_{\infty}(\alpha),E)\in \mathfrak{T} \hspace{0.1in} \Leftrightarrow \hspace{0.1in} (\Lambda_{\infty}(\alpha),E)\in \mathfrak{B} \hspace{0.1in} \Leftrightarrow \hspace{0.1in}  \text{E has } (LB_{\infty}).$$
\item[\color{white}.]{\color{white}.}$\hspace{-0.4in}$ \textnormal{3.} \textnormal{(\cite[2.1 Satz]{vogt2})}  
$\qquad \qquad\;(\Lambda_{1}(\alpha),E)\in \mathfrak{B}  \hspace{0.1in} \Leftrightarrow \hspace{0.1in}$ E has DN.
~\\{\color{white}.}$\hspace{-0.25in}$ \textnormal{(\cite[Theorem 4.9]{pisek2})}  $\qquad\;(\Lambda_{1}(\alpha),E)\in \mathfrak{T}  \hspace{0.15in} \Leftrightarrow \hspace{0.1in}  \text{E has }  \underline{DN}$.
\item[\color{white}.]{\color{white}.}$\hspace{-0.4in}$ \textnormal{4.} \textnormal{(\cite[4.2 Satz]{vogt2})} 
$\qquad\qquad\;(E,\Lambda_{1}(\alpha))\in \mathfrak{B}  \hspace{0.1in} \Leftrightarrow \hspace{0.1in}$ E has $\doubleoverline{\Omega}$.
~\\{\color{white}.}$\hspace{-0.25in}$ {\textnormal{(\cite[Theorem 4.1]{pisek2})}} Assume that either $E$ is nuclear or $E$ is isomorphic to ~\\{\color{white}.}$\hspace{-0.25in}$ $\lambda(A)$  a Köthe space or $\Lambda_{1}(\alpha)$ is nuclear. Then, $$(E,\Lambda_{1}(\alpha))\in \mathfrak{T}\hspace{0.15in} \Leftrightarrow \hspace{0.1in}  \text{E has } \overline{\Omega}.$$

\end{itemize}
\end{theorem}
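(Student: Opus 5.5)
The theorem collects results of Vogt \cite{vogt2} and Piszczek \cite{pisek2}, so the proof will consist mainly of organizing citations. The one general observation I will add makes the three-way equivalences in items 1 and 2 immediate.

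\emph{Bounded implies tame.} I will first record that $\mathfrak{B}\subseteq\mathfrak{T}$ for any pair of Fr\'echet spaces. If $T\in L(E,F)$ is bounded, there is an $r_0$ with $\sup_{\|x\|_{r_0}\le 1}\|Tx\|_k<\infty$ for every $k$. Hence $\pi_T(k)\le r_0\le k$ for all $k\ge r_0$, and the single function $S(k)=k$ serves for every bounded operator. Consequently, if $(E,F)\in\mathfrak{B}$, then $(E,F)\in\mathfrak{T}$.

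\emph{Items 1 and 2.} By the observation above, it suffices to establish two implications in each item. For item 1 these are $(E,\Lambda_\infty(\alpha))\in\mathfrak{T}\Rightarrow E$ has $(LB^\infty)$, and $(LB^\infty)\Rightarrow (E,\Lambda_\infty(\alpha))\in\mathfrak{B}$. Item 2 is the same with $(LB_\infty)$ and the reversed pair.
\begin{itemize}
\item[(a)] The implications from the invariant to boundedness are Vogt's \cite[5.2 Satz, 3.2 Satz]{vogt2}. They hold for arbitrary exponent sequences and I will cite them directly.
\item[(b)] The implications from tameness to the invariant are Piszczek's \cite[Theorems 4.2, 4.10]{pisek2}. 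His argument is by contradiction. If $(LB^\infty)$ fails for some $\rho_n\uparrow\infty$ and some $p$, then for each $q$ one picks functionals $y\in E'$ violating the interpolation inequality along a sparse sequence of indices. These functionals are placed as coordinates of an operator $T:E\to\Lambda_\infty(\alpha)$ of the form $x\mapsto (c_j\,y_j(x))_j$. The weights $c_j$ are chosen so that $T$ is continuous but $\pi_T$ exceeds any prescribed nondecreasing $S$ infinitely often.
\end{itemize}

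\emph{Items 3 and 4.} Each is a pair of citations and needs no further combination. The boundedness characterizations are \cite[2.1 Satz, 4.2 Satz]{vogt2}. The tameness characterizations are \cite[Theorems 4.9, 4.1]{pisek2}. In item 4 the extra hypothesis (nuclearity of $E$ or of $\Lambda_1(\alpha)$, or $E$ a K\"othe space) is exactly the hypothesis under which Piszczek proves the implication $\overline{\Omega}\Rightarrow$ tame. It enters through a factorization or decomposition of $E$ into local pieces that are compared coordinatewise with $\Lambda_1(\alpha)$.

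\emph{Main obstacle.} The delicate step is the construction in (b), where a failure of the invariant must produce a single operator that beats every tameness function $S$. This is where weak stability of $\alpha$ is used. It keeps $\alpha_{n+1}/\alpha_n$ bounded, so the seminorms $|\cdot|_r$ of $\Lambda_\infty(\alpha)$ do not jump between consecutive coordinates. The loss in the estimate for $\pi_T$ is therefore controlled uniformly, and the diagonal choice of indices against a countable family of candidate functions $S_\alpha$ (Remark ii) goes through. Since all of this is already carried out in \cite{pisek2}, in the paper I would only indicate this role of weak stability and refer to the cited theorems.
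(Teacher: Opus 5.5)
Your proposal matches the paper, which gives no argument beyond assembling the cited results of Vogt and Piszczek. Your added observation that $\mathfrak{B}\subseteq\mathfrak{T}$ (via $S(k)=k$ and $k_0=r_0$) is correct, and it just makes explicit how one direction from each source closes the cycles in items 1 and 2.

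The caveats concern only your commentary, not the proof. First, no single operator can beat every nondecreasing $S$, since $S(k)=\max_{j\le k}\pi_T(j)$ dominates $\pi_T$. The non-tameness construction must therefore be made for each prescribed $S$ (or countable family $S_\alpha$), which is what your item (b) actually says. Second, your claims about exactly where weak stability and the item-4 hypotheses enter Piszczek's proofs are unverified, and a citation-based proof does not need them.
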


\subsection{Nuclear Fr\'echet Spaces with the properties $\underline{\textnormal{DN}}$ and $\Omega$}

The linear topological invariants $\underline{\textnormal{DN}}$ and $\Omega$ play a fundamental role in the structural theory of Fr\'echet spaces. The class of nuclear Fr\'echet spaces possessing both properties is exceptionally rich and encompasses two major families of function spaces arising in complex analysis and partial differential equations:

\begin{enumerate}
    \item \textbf{Spaces of Holomorphic Functions:} Let $\mathcal{O}(M)$ denote the space of analytic functions on a $d$-dimensional Stein manifold $M$, equipped with the compact-open topology. The space $\mathcal{O}(M)$ is a nuclear Fr\'echet space. In particular, $\mathcal{O}(\mathbb{C}^{d})$ is isomorphic to $\Lambda_{\infty}(n^{1/d})$ and $\mathcal{O}(\mathbb{D}^{d})$ is isomorphic to $\Lambda_{1}(n^{1/d})$, where $\mathbb{D}^{d}$ denotes the $d$-dimensional unit polydisc. It is known that $\mathcal{O}(M)$ is isomorphic to a subspace of $\mathcal{O}(\mathbb{D}^{k})$ for some $k$, which implies that $\mathcal{O}(M)$ enjoys the property $\underline{\textnormal{DN}}$. Furthermore, by the Oka-Cartan theorem, $\mathcal{O}(M)$ is isomorphic to a quotient space of $\mathcal{O}(\mathbb{C}^{N})$ for some $N$, and hence it possesses the property $\Omega$. For a comprehensive treatment of the topological properties of $\mathcal{O}(M)$ and related function spaces, we refer to \cite{A1, A2, A, AKT2, R1, TTA} and the references therein.

    \item \textbf{Solution Spaces of Elliptic Partial Differential Operators:} Let $P(D)$ be an elliptic linear partial differential operator with constant coefficients on $\mathbb{R}^{d}$ ($d\geq 2$), and let $O$ be an open subset of $\mathbb{R}^d$. The space of solutions 
    $
    \mathcal{N}(O) = \lbrace f\in C^{\infty}(O) : P(D)f = 0\rbrace$ 
    is a nuclear Fr\'echet space. Vogt \cite{vogtA} showed that $\mathcal{N}(O)$ always possesses property $\Omega$, and further proved that $\mathcal{N}(O)$ satisfies property $\underline{\textnormal{DN}}$ whenever $O$ is connected. Detailed structural properties and related characterizations of such solution spaces can be found in \cite{DK, K1, L1, L2, Wie, TTA, vogtA}.
\end{enumerate}

Having introduced these key models, we now recall the fundamental result regarding the diametral dimension of nuclear Fr\'echet spaces with the properties $\underline{DN}$ and $\Omega$. Aytuna et al. \cite[Proposition 1.1]{AKT2} showed that the diametral dimension of such spaces is determined, up to inclusion,
by the diametral dimensions of the associated power series spaces.

\begin{pro}\cite[Proposition 1.1]{AKT2} 
Let $Y$ be a nuclear Fr\'echet space with properties $\underline{\textnormal{DN}}$ and $\Omega$. There exists an exponent sequence $\mathcal{E}=\left(\mathcal{E}_{n}\right)$ (unique up to equivalence) satisfying:
\begin{equation}
\Delta\left(\Lambda_{1}\left(\mathcal{E}\right)\right)\subseteq\Delta\left(Y\right)\subseteq\Delta\left(\Lambda_{\infty}\left(\mathcal{E}\right)\right). 
\end{equation}
Furthermore, $\Lambda_{1}\left(\alpha\right)\subseteq\Delta\left(Y\right)$ implies $\Lambda_{1}\left(\alpha\right)\subseteq \Lambda_{1}\left(\mathcal{E}\right)$, and $\Delta\left(Y\right)\subseteq \Lambda'_{\infty}\left(\alpha\right)$ implies $\Lambda'_{\infty}\left(\mathcal{E}\right)\subseteq\Lambda'_{\infty}\left(\alpha\right)$.
\end{pro}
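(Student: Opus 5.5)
The plan is to define $\mathcal{E}$ through the Kolmogorov diameters of $Y$ and then use the two invariants one at a time, one for each inclusion. Fix Hilbertian seminorms $\|\cdot\|_0\le\|\cdot\|_1\le\cdots$ on $Y$ with unit balls $U_0\supseteq U_1\supseteq\cdots$. Using nuclearity, we may assume the linking maps are Hilbert--Schmidt, so that diameters can be read off as singular numbers.

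\textbf{Defining $\mathcal{E}$.} Let $p$ be the index from $\underline{DN}$. Set $\mathcal{E}_n=-\ln d_n(U_{q},U_p)$, where $q=q(p)$ is given by $\Omega$ applied to $p$. Nuclearity forces $d_n(U_q,U_p)\to0$, so $\mathcal{E}_n\to\infty$ and $\mathcal{E}$ is an admissible exponent sequence. The aim is then to compare $d_n(U_r,U_s)$ for arbitrary $s<r$ with powers $e^{-t\mathcal{E}_n}$.

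\textbf{Lower estimate from $\underline{DN}$.} The first inclusion $\Delta(\Lambda_1(\mathcal{E}))\subseteq\Delta(Y)$ means: for every $k$ there is $r$ with $d_n(U_r,U_k)\le C e^{-\epsilon\mathcal{E}_n}$ for some $\epsilon>0$. I would derive this from the $\underline{DN}$ inequality $\|x\|_k\le C\|x\|_p^{1-\lambda}\|x\|_n^{\lambda}$. The standard interpolation lemma for Hilbert seminorms turns this into a comparison of singular numbers,
\[ d_j(U_n,U_k)\le C\, d_j(U_n,U_p)^{1-\lambda}. \]
Taking $n\ge q$ gives $d_j(U_n,U_p)\le d_j(U_q,U_p)=e^{-\mathcal{E}_j}$, so $d_j(U_n,U_k)\le Ce^{-(1-\lambda)\mathcal{E}_j}$. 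Hence every sequence in $\Lambda_1(\mathcal{E})$, which grows at most like $e^{\epsilon'\mathcal{E}_j}$ for every $\epsilon'>0$, lies in $\Delta(Y)$.

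\textbf{Upper estimate from $\Omega$.} The second inclusion $\Delta(Y)\subseteq\Delta(\Lambda_\infty(\mathcal{E}))=\Lambda_\infty(\mathcal{E})'$ requires: if $(t_n)\in\Delta(Y)$, then $|t_n|\le Ce^{R\mathcal{E}_n}$ for some $R$. Dualizing the $\Omega$ inequality for $p,q$ and arbitrary $k$ gives the reverse interpolation
\[ d_j(U_q,U_p)\le C\,d_j(U_k,U_p)^{\theta}. \]
This yields $d_j(U_k,U_p)\ge c\,e^{-\mathcal{E}_j/\theta}$ for every $k$. Since $(t_n)\in\Delta(Y)$ implies $t_j d_j(U_k,U_p)\to0$ for some $k$, we get $|t_j|\le Ce^{\mathcal{E}_j/\theta}$, as required. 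One point needs care: $\Delta(Y)$ quantifies over all $p$, whereas this argument uses the specific $p$ from $\underline{DN}$. For the inclusion it suffices to test the single $p$; for the first inclusion it is handled by monotonicity in $k$.

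\textbf{Extremality and uniqueness.} The extremality statements come from sharpness of the constructed $\mathcal{E}$. Suppose $\Lambda_1(\alpha)\subseteq\Delta(Y)$. Then for every $\epsilon$ the sequence $e^{\epsilon\alpha_n}$ lies in $\Delta(Y)$, and testing it at the level $p$ gives $e^{\epsilon\alpha_n}d_n(U_r,U_p)\to0$. Combined with the $\Omega$ lower bound $d_n(U_r,U_p)\ge ce^{-\mathcal{E}_n/\theta}$, this yields $\epsilon\alpha_n\le\mathcal{E}_n/\theta+O(1)$ for every $\epsilon$, i.e. $\alpha_n=o(\mathcal{E}_n)$ up to scaling, which is exactly $\Lambda_1(\alpha)\subseteq\Lambda_1(\mathcal{E})$. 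The symmetric argument, testing $e^{R\mathcal{E}_n}$, which lies in $\Delta(Y)$ by the $\underline{DN}$ bound, handles the infinite-type statement. Uniqueness up to equivalence follows by applying extremality to two candidate sequences.

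The main obstacle is the interpolation step that converts the norm inequalities $\underline{DN}$ and $\Omega$ into multiplicative inequalities between Kolmogorov diameters with uniform exponents. I would first try to do this via simultaneous diagonalization of pairs of Hilbert seminorms. For $\Omega$ this requires the dual-norm formulation, and that is where most of the technical work lies.
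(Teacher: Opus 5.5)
The paper gives no proof of this proposition; it is quoted from Aytuna--Krone--Terzio\u{g}lu. Your definition $\mathcal{E}_n=-\ln d_n(U_q,U_p)$ and your two diameter comparisons are sound, and they give both inclusions. One correction to your assessment of where the work lies: the $\Omega$ step is the elementary one. The dual inequality yields $U_q\subseteq C\,\overline{\tau U_p+\tau^{-(1-\theta)/\theta}U_k}$ for all $\tau>0$, and the choice $\tau=d_j(U_k,U_p)^{\theta}$ gives $d_j(U_q,U_p)\le 2C\,d_j(U_k,U_p)^{\theta}$ with no Hilbert structure. It is the $\underline{DN}$ step that needs simultaneous diagonalization, because the orthogonal projection is what keeps $\|\cdot\|_n$ bounded.

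The genuine gap is in the extremality paragraph.
\begin{itemize}
\item By your own (correct) description, $\Delta(\Lambda_1(\alpha))$ consists of the $t$ with $t_ne^{-\epsilon\alpha_n}\to0$ for \emph{every} $\epsilon>0$. So $e^{\epsilon\alpha_n}$ lies in it for no $\epsilon>0$, and the hypothesis tells you nothing about it.
\item The step also proves too much. With $\theta$ fixed it would give $\alpha_n=o(\mathcal{E}_n)$, which is false for $Y=\Lambda_1(\mathcal{E})$ (with $\mathcal{E}$ finitely nuclear) and $\alpha=\mathcal{E}$. Moreover, $\theta$ depends on the level $k$, hence on the test sequence.
\item The correct target is $\alpha=O(\mathcal{E})$, which is what $\Delta(\Lambda_1(\alpha))\subseteq\Delta(\Lambda_1(\mathcal{E}))$ means. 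The statement's $\Lambda_1(\cdot)$ has to be read as $\Delta(\Lambda_1(\cdot))$ here.
\item The infinite-type half fails in the same way: $e^{R\mathcal{E}_n}\notin\Delta(Y)$ in general. Your $\underline{DN}$ estimate only gives decay $e^{-(1-\lambda)\mathcal{E}_j}$ with $\lambda=\lambda(k)$ possibly tending to $1$, and for $Y=\Lambda_1(\mathcal{E})$ no $R>0$ works.
\end{itemize}

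What is missing is a test sequence lying strictly between the two growth scales, i.e.\ the lemma: $\Delta(\Lambda_1(\alpha))\subseteq\Lambda'_\infty(\beta)$ if and only if $\alpha=O(\beta)$. For the nontrivial direction, suppose $\alpha_{n_k}/\beta_{n_k}\to\infty$. Set $t_{n_k}=\exp(\sqrt{\alpha_{n_k}\beta_{n_k}})$ and $t_n=1$ otherwise. Then $t_ne^{-\epsilon\alpha_n}\to0$ for every $\epsilon>0$, while $t_ne^{-R\beta_n}$ is unbounded for every $R$.

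With this lemma, the extremality statements follow formally from the inclusions you already proved:
\begin{itemize}
\item $\Delta(\Lambda_1(\alpha))\subseteq\Delta(Y)\subseteq\Lambda'_\infty(\mathcal{E})$ gives $\alpha=O(\mathcal{E})$.
\item $\Delta(\Lambda_1(\mathcal{E}))\subseteq\Delta(Y)\subseteq\Lambda'_\infty(\alpha)$ gives $\mathcal{E}=O(\alpha)$, i.e.\ $\Lambda'_\infty(\mathcal{E})\subseteq\Lambda'_\infty(\alpha)$.
\item Uniqueness up to equivalence is the same argument applied to two sandwiching sequences.
\end{itemize}
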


The sequence $\mathcal{E}$ in the above proposition is called the \textbf{associated exponent sequence} of $Y$. We note that $\Lambda_{\infty} (\mathcal{E})$ is always nuclear, whereas $\Lambda_{1} (\mathcal{E})$ need not be nuclear (see \cite{AKT2} for details). In particular, for the concrete models introduced above, the associated exponent sequence is given by $\mathcal{E} = (n^{1/d})$ for $\mathcal{O}(M)$ and $\mathcal{E} = (n^{1/(d-1)})$ for $\mathcal{N}(O)$, (see \cite[p. 318, 320]{TTA} for detailed computations).

By arguments analogous to those developed by Aytuna et al. \cite{AKT2}, one also obtains the corresponding inclusions for the approximate diametral dimension $\delta$:
\begin{equation}
\delta\left(\Lambda_{\infty}\left(\mathcal{E}\right)\right)\subseteq\delta\left(Y\right)\subseteq\delta\left(\Lambda_{1}\left(\mathcal{E}\right)\right),
\end{equation}
see \cite{AKT2,ND2} for further details.

The coincidence of the diametral dimension and/or the approximate diametral dimension of a Fr\'echet space $E$ with that of a power series space is of particular importance as it naturally appears in several concrete settings.
Indeed, in  \cite[Theorem 4.4]{A}, Aytuna established that if $M$ is a Stein
manifold of dimension $d$, then the space $\mathcal{O}(M)$ of holomorphic
functions on $M$ has diametral dimension equal to either
$\Delta(\Lambda_{1}(n^{\frac{1}{d}}))$ or $\Delta(\Lambda_{\infty}(n^{\frac{1}{d}}))$. Such coincidences lead to strong structural consequences for nuclear Fr\'echet spaces.
In particular, Aytuna et al. proved in~\cite[Theorem 1.2]{AKT2} that a nuclear
Fr\'echet space $Y$ with the properties $\underline{DN}$ and $\Omega$ contains a
complemented copy of $\Lambda_{\infty}(\mathcal{E})$, provided that $\Delta(Y)=\Delta(\Lambda_{\infty}(\mathcal{E}))$ and the exponent sequence $\mathcal{E}$ is stable.
Moreover, in ~\cite[Theorem 3.2]{A}, Aytuna showed that  for a nuclear Fr\'echet space $Y$
with the properties $\underline{DN}$ and $\Omega$ and a stable, finitely
nuclear associated exponent sequence $\mathcal{E}$, $Y$ is isomorphic to a power
series space of finite type if and only if $Y$ is tame and
$\delta(Y)=\delta(\Lambda_{1}(\mathcal{E}))$. We note that the second author proved in \cite[Theorem 3.1]{ND1} that
$ \Delta(Y)=\Delta(\Lambda_{\infty}(\mathcal{E}))$ if and only if $\delta(Y)=\delta(\Lambda_{\infty}(\mathcal{E}))$.
However, this equivalence no longer holds in general for $\Lambda_{1}(\mathcal{E})$.
in \cite[Proposition 4.1]{ND1} shows that $\delta(Y)=\delta(\Lambda_{1}(\mathcal{E}))$ implies 
$\Delta(Y)=\Delta(\Lambda_{1}(\mathcal{E}))$. The converse implication fails. Indeed, in~\cite{ND2}, a nuclear Köthe space $Y$ is constructed  such that
$\Delta(Y)=\Delta(\Lambda_{1}(\mathcal{E}))$ but
$\delta(Y)\neq\delta(\Lambda_{1}(\mathcal{E}))$.
For this reason, in Section 4, we formulate our results under assumptions on
the approximate diametral dimension rather than on the diametral dimension.

\section{Tame Factorization Property}\label{sec3}

In this section, we introduce the \textit{tame factorization property} for triples of Fr\'echet spaces and discuss its fundamental properties.

Let $E$, $F$, and $G$ be Fréchet spaces. An operator $T \in L(E,F)$ factors over $G$ if it can be written as $T = RQ$ for some $Q \in L(E,G)$ and $R \in L(G,F)$. The set of all operators that factor over $G$ will be denoted by $L^G(E,F)$.

\begin{defin} 
 Let $E$, $F$ and $G$ be Fr\'echet spaces. We say that the triple $(E,G,F)$ has tame factorization property, denoted by $(E,G,F)\in \mathfrak{TF}$, if there exists a nondecreasing function $S:\mathbb{N}\to \mathbb{N}$ such that for every $T
\in L^G(E,F)$, there exists $k_0 \in \mathbb{N}$ such that
$$\hspace{1.9in}\pi_T(k)\leq S(k)\hspace{1.4in}\forall k\geq k_{0}.$$  
\end{defin}

\begin{rem}\label{Rem1}
Let $E$, $F$ and $G$ be Fr\'echet spaces. The following important statements are straightforward.
\begin{enumerate}
    \item If $(E,G)\in \mathfrak{T}$ and $(G,F)\in \mathfrak{T}$, then $(E,G,F)\in \mathfrak{TF}$.
    \item If $(E, G_{1})\in \mathfrak{T}$ and $(G_{2}, F)\in \mathfrak{T}$, then $(E, G_{1}\times G_{2},F)\in \mathfrak{TF}$. 
    \item $(E,G,F)\in \mathfrak{TF}$ for any G, if $(E,F)\in \mathfrak{T}$.
    \item Let $X$ be a quotient space of $E$, $Y$ a complemented subspace of $G$, and $Z$ a subspace of $F$.
Then, if $(E,G,F) \in \mathfrak{TF}$, it follows that $(X,Y,Z) \in \mathfrak{TF}$.
\end{enumerate}
\end{rem} 

By combining the above remark with known results from the literature, we obtain the following immediate consequences, whose proofs are omitted.

Let $E$ and $F$ be arbitrary Fréchet spaces.

\begin{enumerate}
\item $(\Lambda_{1}(\alpha), E, \Lambda_{1}(\beta)) \in \mathfrak{TF}$, since $(\Lambda_{1}(\alpha), \Lambda_{1}(\beta)) \in \mathfrak{T}$ by Theorem \ref{VP}(3).

\item
The following assertions hold (and remain valid with $\mathfrak{TF}$ replaced by $\mathfrak{BF}$):
\begin{itemize}
\item[(i)] $(E,\Lambda_{1}(\alpha),\Lambda_{\infty}(\beta))\in\mathfrak{TF}$,
\item[(ii)] $(\Lambda_{1}(\alpha),E,\Lambda_{\infty}(\beta))\in\mathfrak{TF}$,
\item[(iii)] $(\Lambda_{1}(\alpha),\Lambda_{\infty}(\beta),E)\in\mathfrak{TF}$.
\end{itemize}
Indeed, every continuous linear operator from
$\Lambda_{1}(\alpha)$ to $\Lambda_{\infty}(\beta)$ is compact by \cite{Z1}.
\end{enumerate}

Assume that the sequence $\alpha$ is weakly-stable.Then the following assertions hold:

\begin{enumerate}
\setcounter{enumi}{2}
\item If $E$ is a nuclear Fréchet space or a Köthe space satisfying $\overline{\Omega}$, and $F$ satisfies $\underline{DN}$, then
\[(E, \Lambda_{1}(\alpha), F) \in \mathfrak{TF},
\]
since $(E, \Lambda_{1}(\alpha)) \in \mathfrak{T}$ and $(\Lambda_{1}(\alpha), F) \in \mathfrak{T}$ .

\item  If $E$ is a nuclear Fréchet space or a Köthe space satisfying $\overline{\Omega}$, 
\[(E, F, \Lambda_{1}(\alpha)) \in \mathfrak{TF}\]
since $(E, \Lambda_{1}(\alpha)) \in \mathfrak{T}$.

\item If $F$ has property $\underline{DN}$, then
\[(\Lambda_{1}(\alpha), E, F) \in \mathfrak{TF},
\]
since $(\Lambda_{1}(\alpha), F) \in \mathfrak{T}$.

\item If $E$ has property $(LB^{\infty})$ and $F$ has property $(LB_{\infty})$, then
\[(E, \Lambda_{\infty}(\alpha), F) \in \mathfrak{TF},
\]
since $(E, \Lambda_{\infty}(\alpha)) \in \mathfrak{T}$ and $(\Lambda_{\infty}(\alpha), F) \in \mathfrak{T}$.

\item If $E$ has property $(LB^{\infty})$, then
\[(E, F, \Lambda_{\infty}(\alpha)) \in \mathfrak{TF},
\]
since $(E, \Lambda_{\infty}(\alpha)) \in \mathfrak{T}$.

\item If $F$ has property $(LB_{\infty})$, then
\[
(\Lambda_{\infty}(\alpha), E, F) \in \mathfrak{TF},
\]
since $(\Lambda_{\infty}(\alpha), F) \in \mathfrak{T}$.
\end{enumerate}
We refer Theorem \ref{VP} for the precise correspondence between each condition and the relation 
$\mathfrak{T}$.

Following the statements in Remark \ref{Rem1}, one is naturally led to consider the following questions.

\begin{itemize}
\item[i.] 
\textit{ Does there exist a triple $(E,G,F)\in \mathfrak{TF}$ such that $(E,G)\notin \mathfrak{T}$ and $(G,F)\notin \mathfrak{T}$?}

An example can be constructed by an important result of Zahariuta. Zahariuta in \cite{Z} showed that every continuous linear operator $T:\Lambda_{1}(\alpha)\to \Lambda_{\infty}(\beta)$ is compact, as a direct consequence, the pair $(\Lambda_{1}(\alpha), \Lambda_{\infty}(\beta))$ is tame. Hence, for any Fr\'echet space $G$, the triple $(\Lambda_{1}(\alpha), G, \Lambda_{\infty}(\beta))$ has tame factorization property. Let $G = G_{1} \oplus G_{2}$, where $G_{1}$ does not satisfy $\underline{DN}$ and $G_{2}$ does not satisfy $\overline{\Omega}$. It follows that $G$ satisfies neither $\underline{DN}$ nor $\overline{\Omega}$.
In this case, we indeed have
$$(\Lambda_{1}(\alpha), G)\notin \mathfrak{T} \qquad \text{and} \qquad (G, \Lambda_{\infty}(\beta))\notin\mathfrak{T}.$$
To give concrete examples, one can consider the Dragilev spaces $L_{f}(a,r)$ where $f$ is chosen to be rapidly increasing. In this case, $L_{f}(a,0)$ does not have property $\underline{DN}$, while $L_{f}(a,1)$ does not have property $\overline{\Omega}$, see \cite{KN} for details. 
\item[ii.] \emph{Is there a triple $(X,Y,Z)\in \mathfrak{TF}$ such that $(X,Z)\notin \mathfrak{T}$?}

Let $\alpha$ be a weakly-stable sequence and let $E$ be a Fréchet space that does not satisfy the $\underline{DN}$ property. Then
\[
(\Lambda_{1}(\alpha),E)\notin \mathfrak{T}.
\]
On the other hand, for any sequence $\beta$, each operator from $\Lambda_{1}(\alpha)$ to $\Lambda_{\infty}(\beta)$
is compact. It follows that
\[
(\Lambda_{1}(\alpha),\Lambda_{\infty}(\beta),E)\in \mathfrak{BF}
\quad \text{and} \quad
(\Lambda_{1}(\alpha),\Lambda_{\infty}(\beta),E)\in \mathfrak{TF}.
\]
\end{itemize}

\section{Characterizing Topological Invariants via the Tame Factorization Property}

When one of the spaces is a power series space, the bounded and tame pairs of Fr\'echet spaces were fundamentally characterized via linear topological invariants by Vogt \cite{vogt2} and Piszczek \cite{pisek2}, respectively. Extending the results of Section~\ref{sec3}, we investigate whether the tame factorization property provides exact characterizations for the topological invariants of an arbitrary Fr\'echet space $X$. To do this, we investigate triples consisting of an arbitrary Fr\'echet space $X$, a nuclear Fr\'echet space $Y$ satisfying properties $\underline{\textnormal{DN}}$ and $\Omega$, and a power series space of finite type $\Lambda_1(\mathcal{E})$ or infinite type $\Lambda_\infty(\mathcal{E})$. We show that requiring such triples to possess the tame factorization property $\mathfrak{TF}$ (or its bounded variant $\mathfrak{BF}$) characterizes the linear topological invariants on the arbitrary Fr\'echet space $X$.

We first treat factorization without any assumption on the approximate diametral dimension of $Y$. We show in Theorem~\ref{TA1} that $(X, Y, \Lambda_1(\mathcal{E})) \in \mathfrak{TF}$ implies $(X,Y)$ to be tame, and in Theorem~\ref{TA1-1} that $(X,\Lambda_\infty(\mathcal{E}),Y) \in \mathfrak{TF}$ is equivalent to $(X,\Lambda_\infty(\mathcal{E}),Y) \in \mathfrak{BF}$, and to $X$ having the property $(LB^\infty)$. Under the coincidence $\delta(Y) = \delta(\Lambda_\infty(\mathcal{E}))$, Theorem~\ref{TA2} and the Theorem~\ref{TA22} show that tame and bounded factorization over $\Lambda_\infty(\mathcal{E})$ coincide and are equivalent to $X$ possessing $LB^\infty$ or $LB_\infty$. The most involved case is the coincidence $\delta(Y) = \delta(\Lambda_1(\mathcal{E}))$, treated in Theorem~\ref{T1}, where we show that $(X,\Lambda_1(\mathcal{E}), Y) \in \mathfrak{TF}$ is equivalent to $X$ having property $\overline{\Omega}$, and that $X$ is isomorphic to a power series space of finite type exactly when $\underline{DN}$ holds on $X$ in addition. Since $\Lambda_1(\mathcal{E})$ need not embed as a subspace of $Y$ in this case, the proof adapts the $r$-local imbedding technique developed by Aytuna \cite{A}, combined with Grothendieck's factorization theorem.

We begin our investigation by considering tame factorizations without any restrictions on the approximate diametral dimension of $Y$.
 
\begin{theorem}\label{TA1} Let $X$ be a Fréchet space and let $Y$ be a nuclear Fr\'echet space whose associated exponent sequence $\mathcal{E}=(\mathcal{E}_{n})_{n\in \mathbb{N}}$ is stable and finitely nuclear and which has the properties $\underline{DN}$ and $\Omega$.  
 If $(X,Y,\Lambda_{1}(\mathcal{E}))\in \mathfrak{TF}$, then $(X,Y)\in \mathfrak{T}.$ Consequently, the tameness of $(X,Y)$ follows in particular when $X$ satisfies the property $\overline{\Omega}$.
\end{theorem}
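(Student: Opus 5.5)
We will show that every $T\in L(X,Y)$ can be turned into an operator in $L^{Y}(X,\Lambda_1(\mathcal{E}))$ by composing with one fixed operator $J\in L(Y,\Lambda_1(\mathcal{E}))$. We will then transport the uniform tameness bound for $JT$ back to $T$. The key input is structural. Since $Y$ is nuclear with $\underline{DN}$, and $\mathcal{E}$ is stable and finitely nuclear, $Y$ should admit a continuous linear embedding $J:Y\to\Lambda_1(\mathcal{E})$ that is also \emph{tamely open onto its image}. This is in the spirit of the fact, recalled in Section~2, that $\mathcal{O}(M)$ embeds into $\mathcal{O}(\mathbb{D}^k)$, and of Aytuna's work \cite{A}. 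Concretely, we want a nondecreasing $\sigma:\mathbb{N}\to\mathbb{N}$ such that
\[
\|y\|_k\le C_k\,|Jy|_{\sigma(k)}\qquad \forall y\in Y,\ k\ge k_1 .
\]
Here $|\cdot|_{\sigma(k)}$ is a suitable (reindexed) grading of $\Lambda_1(\mathcal{E})$. We plan to obtain $J$ and the linear control $\sigma$ from the $\underline{DN}$ inequality $\|y\|_k\le C\|y\|_p^{1-\lambda}\|y\|_n^\lambda$. Using the inclusion $\Delta(\Lambda_1(\mathcal{E}))\subseteq\Delta(Y)$ of \cite[Proposition 1.1]{AKT2}, this inequality lets us realize $Y$ inside a Hilbert scale modelled on $\Lambda_1(\mathcal{E})$. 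Interpolation (the exponent $\lambda$ depending tamely on $k$) should make the loss of indices controlled.

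Given such $J$, the main argument is short. Let $S$ be the function witnessing $(X,Y,\Lambda_1(\mathcal{E}))\in\mathfrak{TF}$, and let $T\in L(X,Y)$. Then $JT\in L^{Y}(X,\Lambda_1(\mathcal{E}))$, since it factors as $J\circ T$. Hence there is $k_0$ with $\pi_{JT}(k)\le S(k)$ for $k\ge k_0$. For $k\ge\max(k_0,k_1)$ we then have
\[
\|Tx\|_k\le C_k|JTx|_{\sigma(k)}\le C_k'\|x\|_{S(\sigma(k))},
\]
which gives $\pi_T(k)\le S(\sigma(k))$. Thus $(X,Y)\in\mathfrak{T}$ with the single majorant $S\circ\sigma$, which is nondecreasing and independent of $T$.

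For the final sentence of the theorem, assume $X$ has $\overline{\Omega}$. By Theorem~\ref{VP}(4), since $\Lambda_1(\mathcal{E})$ is nuclear ($\mathcal{E}$ finitely nuclear), we get $(X,\Lambda_1(\mathcal{E}))\in\mathfrak{T}$. Remark~\ref{Rem1}(3) then gives $(X,Y,\Lambda_1(\mathcal{E}))\in\mathfrak{TF}$, and the first part yields $(X,Y)\in\mathfrak{T}$. Stability of $\mathcal{E}$ is needed here to make $\mathcal{E}$ weakly-stable, so that Theorem~\ref{VP} applies.

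The main obstacle is constructing the tame embedding $J$ with linear index control. Without an assumption on $\delta(Y)$, $Y$ need not be a subspace of $\Lambda_1(\mathcal{E})$ in the obvious way. If a global tame embedding is not available, we would fall back on a weaker local version. For each $k$, we would produce a map $J_k:Y\to\Lambda_1(\mathcal{E})$, chosen from a family that is continuous uniformly in $k$, such that $\|y\|_k\lesssim|J_ky|_{\sigma(k)}$. This is an $r$-local imbedding in Aytuna's sense. We would then apply $\mathfrak{TF}$ to the operators $J_kT$. To obtain a uniform $k_0$, we would combine them into one operator into a countable product, or use a Baire/Grothendieck factorization argument over the countable family.
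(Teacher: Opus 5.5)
Your reduction is the paper's argument. The paper notes that $Y$ is isomorphic to a subspace of $\Lambda_1(\mathcal{E})$, applies Remark~\ref{Rem1}(4) to get $(X,Y,Y)\in\mathfrak{TF}$, and concludes $(X,Y)\in\mathfrak{T}$. Your chain $\|Tx\|_k\le C_k|JTx|_{\sigma(k)}\le C_k'\|x\|_{S(\sigma(k))}$ is just that remark unpacked. Your treatment of the $\overline{\Omega}$ case also matches the paper's, and is more careful about the nuclearity of $\Lambda_1(\mathcal{E})$ and about stability implying weak stability.

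\textbf{The gap.} What is missing is the existence of $J$. You leave it as a plan (``interpolation should make the loss of indices controlled'') and even doubt it. Two things are off here.

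\emph{No tame control is needed.} Any topological isomorphism $J$ of $Y$ onto a subspace of $\Lambda_1(\mathcal{E})$ will do. Continuity of $J^{-1}$ on $J(Y)$ gives some nondecreasing $\sigma$ with $\sigma(k)\ge k$ and $\|y\|_k\le C_k|Jy|_{\sigma(k)}$. Since $\sigma$ depends only on $J$ and not on $T$, $S\circ\sigma$ is already a single majorant.

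\emph{The embedding exists with no hypothesis on $\delta(Y)$.} It is Vogt's characterization of subspaces of nuclear stable power series spaces of finite type (3.2 Satz of \cite{vogt3}). That result uses precisely the ingredients you name: nuclearity and $\underline{DN}$ of $Y$, stability and finite nuclearity of $\mathcal{E}$, and the inclusion $\Delta(\Lambda_1(\mathcal{E}))\subseteq\Delta(Y)$ from \cite[Proposition 1.1]{AKT2}. Proving it directly from the $\underline{DN}$ inequality is a substantial theorem, not an interpolation exercise, so cite it rather than sketch it.

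\textbf{The direction of embedding.} Your remark that without a condition on $\delta(Y)$ the space $Y$ ``need not be a subspace of $\Lambda_1(\mathcal{E})$'' confuses the two directions. In Theorem~\ref{T1}, the coincidence $\delta(Y)=\delta(\Lambda_1(\mathcal{E}))$ and Aytuna's $r$-local imbeddings are used to map $\Lambda_1(\mathcal{E})$ \emph{into} $Y$, because that direction of embedding is not known. Definition~\ref{def:rlocal} concerns operators $\Lambda_1(\alpha)\to Y$, not $Y\to\Lambda_1(\mathcal{E})$. Your fallback mislabels maps $Y\to\Lambda_1(\mathcal{E})$ as $r$-local imbeddings, and both it and the proposed Grothendieck factorization over them are unnecessary. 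Once Vogt's theorem is cited, your main argument (given $J$) is a complete proof.
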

\begin{proof} Let us assume that $Y$ is a nuclear Fréchet with the properties $\underline{DN}$ and $\Omega$ and the associated exponent sequence $\mathcal{E}$ is stable and finitely nuclear.  $Y$ is isomorphic to a subspace of $\Lambda_{1}(\mathcal{E})$ by \cite[3.2 Satz]{vogt3}. 


Remark~\ref{Rem1},(4) gives that the assumption $(X,Y,\Lambda_{1}(\mathcal{E}))\in \mathfrak{TF}$ implies that $(X,Y,Y)\in \mathfrak{TF}$, and consequently, the pair $(X,Y)$ is tame.

If $X$ has the property $\overline{\Omega}$, then $(X,\Lambda_{1}(\alpha))\in \mathfrak{T}$ by Theorem \ref{VP}(4). It follows that $(X,Y,\Lambda_{1}(\alpha))\in \mathfrak{TF}$, and hence $(X,Y)\in \mathfrak{T}$.
\end{proof}

\begin{theorem}\label{TA1-1} Let $X$ be a Fréchet space and let $Y$ be a nuclear Fr\'echet space whose associated exponent sequence $\mathcal{E}=(\mathcal{E}_{n})_{n\in \mathbb{N}}$ is stable and finitely nuclear and which has the properties $\underline{DN}$ and $\Omega$. The following statements are equivalent:
\begin{enumerate}
\item $(X,\Lambda_{\infty}(\mathcal{E}), Y)\in \mathfrak{BF}$,
\item $(X,\Lambda_{\infty}(\mathcal{E}), Y)\in \mathfrak{TF}$,
\item $(X,\Lambda_{\infty}(\mathcal{E}))\in \mathfrak{T}$  \item $(X,\Lambda_{\infty}(\mathcal{E}))\in \mathfrak{B}$,
\item $X$ has the property $(LB^{\infty})$.
\end{enumerate}
\end{theorem}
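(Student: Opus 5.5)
The plan is to run the cycle of implications $(5)\Rightarrow(3),(4)\Rightarrow(1),(2)\Rightarrow(5)$, using Theorem~\ref{VP}(1) for the pairwise equivalences. Since $\mathcal{E}$ is stable, it is in particular weakly-stable. Hence Theorem~\ref{VP}(1) applies with $\alpha=\mathcal{E}$ and gives at once
\[
(3)\Leftrightarrow(4)\Leftrightarrow(5).
\]
Next, if $(X,\Lambda_\infty(\mathcal{E}))\in\mathfrak{T}$ (resp. $\in\mathfrak{B}$), then Remark~\ref{Rem1} settles the factorization statements. In the tame case, every $T=RQ$ with $Q\in L(X,\Lambda_\infty(\mathcal{E}))$ is tame uniformly: $\pi_{RQ}(k)\le \pi_Q(\pi_R(k))$, and $Q$ is $S$-tame for the single function $S$ from (3). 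I would check this inequality for the composition carefully. For the bounded case, $Q$ is bounded, so $RQ$ is bounded. This gives $(3)\Rightarrow(2)$ and $(4)\Rightarrow(1)$.

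For the converse directions $(1)\Rightarrow(4)$ and $(2)\Rightarrow(3)$, I would reduce to Remark~\ref{Rem1}(4). The key structural input is that $\Lambda_\infty(\mathcal{E})$ sits inside $Y$ as a subspace. Then $(X,\Lambda_\infty(\mathcal{E}),Y)\in\mathfrak{TF}$ implies $(X,\Lambda_\infty(\mathcal{E}),\Lambda_\infty(\mathcal{E}))\in\mathfrak{TF}$, taking $Z=\Lambda_\infty(\mathcal{E})\subset Y$ and $G$ complemented in itself. Every $Q\in L(X,\Lambda_\infty(\mathcal{E}))$ factors as $\mathrm{id}\circ Q$, so the pair $(X,\Lambda_\infty(\mathcal{E}))$ is tame. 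The same argument works verbatim for $\mathfrak{BF}$ and $\mathfrak{B}$. One subtlety: Remark~\ref{Rem1}(4) requires $Z$ to be a subspace of $F$ with the induced topology. Also, $T$ maps into $Z$, so $\pi_T$ computed in $Z$ agrees with $\pi_T$ computed in $F$ up to equivalent seminorms.

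The main obstacle is justifying that $\Lambda_\infty(\mathcal{E})$ is isomorphic to a subspace of $Y$ when no assumption is made on $\delta(Y)$ or $\Delta(Y)$. For this I would invoke the known embedding theorem for nuclear spaces with $\underline{DN}$ and $\Omega$. By the Vogt/Aytuna–Krone–Terzio\u{g}lu theory, a nuclear Fr\'echet space with $\underline{DN}$ and $\Omega$ whose associated exponent sequence is stable contains a copy of $\Lambda_\infty(\mathcal{E})$. This is the analogue of \cite[3.2 Satz]{vogt3}, which was used in Theorem~\ref{TA1} to embed $Y$ in $\Lambda_1(\mathcal{E})$, and it is where stability of $\mathcal{E}$ enters.

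If only a weaker statement is available, I would fall back on a more hands-on route. Here $\Lambda_\infty(\mathcal{E})$ has $DN$, and $Y$ has $\Omega$ and $\Delta(\Lambda_\infty(\mathcal{E}))\supseteq\Delta(Y)$. From this I would construct, via an $r$-local imbedding of Aytuna type, a family of operators $\Lambda_\infty(\mathcal{E})\to Y$ with uniformly controlled characteristic of continuity and a left-inverse-type estimate. This would let non-tame operators $X\to\Lambda_\infty(\mathcal{E})$ produce non-tame compositions into $Y$, contradicting (2).
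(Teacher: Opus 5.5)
Your proposal follows the paper's proof. You get $(3)\Leftrightarrow(4)\Leftrightarrow(5)$ from Theorem~\ref{VP}(1). The converse directions rest on the same key input the paper uses: $\Lambda_\infty(\mathcal{E})$ is isomorphic to a subspace of $Y$. The paper obtains this from Vogt's 3.2 Satz, which embeds $\Lambda_\infty(\mathcal{E})$ into $\Lambda_1(\mathcal{E})$, combined with Theorem 2.2 of Aytuna--Krone--Terzio\u{g}lu. It then applies Remark~\ref{Rem1}(4) and writes $Q=\mathrm{id}\circ Q$, exactly as you do. Your fallback via $r$-local imbeddings is therefore unnecessary.

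One step is wrong as written. Your direct argument for $(3)\Rightarrow(2)$ only yields $\pi_{RQ}(k)\le S(\pi_R(k))$, which depends on $R$ and so is not a bound by a single function. Remark~\ref{Rem1}(1) does not rescue it either. That remark would need $(\Lambda_\infty(\mathcal{E}),Y)\in\mathfrak{T}$, i.e.\ $(LB_\infty)$ on $Y$, which can fail: take $Y=\Lambda_1(\mathcal{E})$, which lacks $DN$. The fix is the paper's route $(3)\Rightarrow(4)\Rightarrow(1)\Rightarrow(2)$. Once every $Q\in L(X,\Lambda_\infty(\mathcal{E}))$ is bounded, each $RQ$ is bounded. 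A bounded operator $T$ has bounded $\pi_T$, so it is $S$-tame with $S(k)=k$.
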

\begin{proof} Let us assume that $Y$ is a nuclear Fréchet with the properties $\underline{DN}$ and $\Omega$ and the associated exponent sequence $\mathcal{E}$ is stable and finitely nuclear. It is evident that (1) implies (2).  

Now, suppose that (2) holds. Since $\mathcal{E}$ is stable and finitely nuclear, $\Lambda_{\infty}(\mathcal{E})$ is isomorphic to a subspace of $\Lambda_{1}(\mathcal{E})$, see \cite[3.2 Satz]{vogt3}. Theorem 2.2 of \cite{AKT2} yields that $\Lambda_{\infty}(\mathcal{E})$ is isomorphic to a subspace of $Y$. By Remark~\ref{Rem1},(4), the assumption $(X,\Lambda_{\infty}(\mathcal{E}),Y)\in \mathfrak{TF}$ implies that $(X,\Lambda_{\infty}(\mathcal{E}),\Lambda_{\infty}(\mathcal{E}))\in \mathfrak{TF}$, and consequently, the pair $(X,\Lambda_{\infty}(\mathcal{E}))$ is tame. Therefore, we obtain (3).

The equivalence of (3), (4), and (5) was established by Vogt and Pisczek which can be seen from Theorem \ref{VP}. It is clear that (4) implies (1). This completes the proof.
\end{proof}

From this point on, we impose additional assumptions on the approximate
diametral dimension. We begin with the case
$\delta(Y)=\delta(\Lambda_{\infty}(\mathcal{E}))$.

\begin{theorem}\label{TA2} Let $X$ be a Fr\'echet space and let $Y$ be a nuclear Fr\'echet space which has the properties $\underline{DN}$ and $\Omega$, and whose associated exponent sequence $\mathcal{E}=(\mathcal{E}_{n})_{n\in \mathbb{N}}$ is stable. Assume that $ \delta(Y)=\delta(\Lambda_{\infty}(\mathcal{E}))$. Then the following statements are  equivalent:
\begin{enumerate}
\item $(\Lambda_{\infty}(\mathcal{E}), Y, X)\in \mathfrak{BF}$,
    \item $(\Lambda_{\infty}(\mathcal{E}), Y, X)\in \mathfrak{TF}$,
    \item $(\Lambda_{\infty}(\mathcal{E}), X)\in \mathfrak{T}$,
    \item $(\Lambda_{\infty}(\mathcal{E}), X)\in \mathfrak{B}$
    \item $X$ has the property $(LB_{\infty})$.
\end{enumerate}
Moreover, if any of the above statements holds, then the pair $(Y,X)$ is tame.
\end{theorem}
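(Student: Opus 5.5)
The plan is to close the cycle (1)$\Rightarrow$(2)$\Rightarrow$(3)$\Leftrightarrow$(4)$\Leftrightarrow$(5)$\Rightarrow$(1), and then deduce the final assertion from the structure of $Y$.

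The implication (1)$\Rightarrow$(2) is immediate: under (1), every $T\in L^{Y}(\Lambda_\infty(\mathcal{E}),X)$ is bounded. Its characteristic of continuity $\pi_T$ is then bounded, so $\pi_T(k)\le k$ for all large $k$ after passing to an equivalent grading. Hence the single function $S(k)=k$ works.

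For (2)$\Rightarrow$(3) the idea is to realize $\Lambda_\infty(\mathcal{E})$ as a complemented subspace of $Y$. Since $\mathcal{E}$ is stable, the result of the second author \cite[Theorem 3.1]{ND1} turns $\delta(Y)=\delta(\Lambda_\infty(\mathcal{E}))$ into $\Delta(Y)=\Delta(\Lambda_\infty(\mathcal{E}))$. Then \cite[Theorem 1.2]{AKT2} shows that $Y$ contains a complemented copy of $\Lambda_\infty(\mathcal{E})$. We apply Remark~\ref{Rem1}(4) with $E=\Lambda_\infty(\mathcal{E})$ (which is its own quotient), with the complemented subspace $\Lambda_\infty(\mathcal{E})$ of $G=Y$, and with $Z=F=X$. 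This gives $(\Lambda_\infty(\mathcal{E}),\Lambda_\infty(\mathcal{E}),X)\in\mathfrak{TF}$. Every $T\in L(\Lambda_\infty(\mathcal{E}),X)$ factors trivially as $T\circ \mathrm{id}$, so the pair $(\Lambda_\infty(\mathcal{E}),X)$ is tame, which is (3).

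The equivalences (3)$\Leftrightarrow$(4)$\Leftrightarrow$(5) are exactly Theorem~\ref{VP}(2). This needs $\mathcal{E}$ to be weakly-stable, which follows from stability since $\mathcal{E}_{n+1}\le\mathcal{E}_{2n}$. For (4)$\Rightarrow$(1), every operator in $L^{Y}(\Lambda_\infty(\mathcal{E}),X)$ is in particular an element of $L(\Lambda_\infty(\mathcal{E}),X)$, hence bounded.

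For the final claim, assume (5), so $X$ has $(LB_\infty)$. I would aim to show that $(Y,\Lambda_\infty(\mathcal{E}))\in\mathfrak{T}$ and then compose. The route is to prove that $Y$ has $(LB^\infty)$ and apply Theorem~\ref{VP}(1).

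The cheapest path is to show that, under $\Delta(Y)=\Delta(\Lambda_\infty(\mathcal{E}))$ with $\mathcal{E}$ stable, $Y$ is in fact isomorphic to $\Lambda_\infty(\mathcal{E})$. Indeed, a nuclear space with $\underline{DN}$ and $\Omega$ whose diametral dimension is that of $\Lambda_\infty(\mathcal{E})$ should satisfy $DN$, and then the Vogt–Wagner/Aytuna–Krone–Terzioğlu theory identifies it as $\Lambda_\infty(\mathcal{E})$. If so, $(Y,X)\cong(\Lambda_\infty(\mathcal{E}),X)\in\mathfrak{T}$ by (3).

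If full isomorphism is not available, I would instead try to verify $(LB^\infty)$ on $Y$ directly. This means combining the $\Omega$ estimates of $Y$ with the asymptotics of the Kolmogorov diameters forced by $\delta(Y)=\delta(\Lambda_\infty(\mathcal{E}))$, imitating Vogt's argument for $\Lambda_\infty$. Once $(Y,\Lambda_\infty(\mathcal{E}))\in\mathfrak{T}$ holds, one more ingredient is needed: tameness of $(Y,X)$ itself, rather than just of factored operators. For this I would use that $Y$ embeds into $\Lambda_\infty(\mathcal{E})$ with extension of operators into $X$; $(LB_\infty)$ of $X$ should give such extensions.

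I expect this last step, establishing $(Y,X)\in\mathfrak{T}$, to be the main obstacle. The first attempt will be the isomorphism $Y\cong\Lambda_\infty(\mathcal{E})$.
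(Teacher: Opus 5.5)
Your proof of the equivalences (1)--(5) follows the paper's proof. You pass from $\delta$ to $\Delta$, obtain a complemented copy of $\Lambda_\infty(\mathcal{E})$ in $Y$, apply Remark~\ref{Rem1}(4), and then use Theorem~\ref{VP}(2). You also close the cycle with the trivial implication (4)$\Rightarrow$(1), which the paper leaves implicit. The gap is in the final assertion $(Y,X)\in\mathfrak{T}$: neither of your routes can work.

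\textbf{First route.} It rests on the claim that $\underline{DN}$, $\Omega$ and $\Delta(Y)=\Delta(\Lambda_\infty(\mathcal{E}))$ force $DN$, so that $Y\cong\Lambda_\infty(\mathcal{E})$. This is false. Take $Y=\Lambda_\infty(n)\times\Lambda_1(n^2)$.
\begin{itemize}
\item $Y$ is nuclear and has $\underline{DN}$ and $\Omega$, since both properties pass to finite products.
\item The Kolmogorov diameters of the second factor decay like $e^{-cn^2}$ and are negligible. Hence $\mathcal{E}_n=n$, which is stable, and $\delta(Y)=\delta(\Lambda_\infty(n))$.
\item $Y$ contains $\Lambda_1(n^2)$ as a complemented subspace, and $\Lambda_1(n^2)$ lacks $DN$. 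Since $DN$ is inherited by subspaces, $Y$ does not have $DN$.
\end{itemize}
So $Y$ is not isomorphic to $\Lambda_\infty(n)$, nor to any subspace of it. This also rules out your idea of embedding $Y$ into $\Lambda_\infty(\mathcal{E})$ and extending operators.

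\textbf{Fallback route.} This is impossible in principle, because $Y$ contains a complemented copy of $\Lambda_\infty(\mathcal{E})$, as you used yourself. Suppose $Y$ had $(LB^{\infty})$, or $(Y,\Lambda_\infty(\mathcal{E}))\in\mathfrak{T}$. Then Theorem~\ref{VP}(1) would give $(Y,\Lambda_\infty(\mathcal{E}))\in\mathfrak{B}$. The projection $Y\to\Lambda_\infty(\mathcal{E})$ would then be bounded, and so would the identity of $\Lambda_\infty(\mathcal{E})$. That would make an infinite-dimensional nuclear Fr\'echet space normable, which is absurd.

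\textbf{Missing idea.} You need the opposite relation: $Y$ is a \emph{quotient} of $\Lambda_\infty(\mathcal{E})$. This is Vogt's power series representation theorem, which the paper invokes. Let $q:\Lambda_\infty(\mathcal{E})\to Y$ be a quotient map and $T\in L(Y,X)$. Statement (4) makes $Tq$ bounded on some zero neighbourhood $U$. By the open mapping theorem, $q(U)$ is a zero neighbourhood in $Y$, and $T(q(U))=Tq(U)$ is bounded. Hence $(Y,X)\in\mathfrak{B}$, and in particular $(Y,X)\in\mathfrak{T}$. Neither an isomorphism $Y\cong\Lambda_\infty(\mathcal{E})$ nor any $(LB^{\infty})$-type property of $Y$ is needed.
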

\begin{proof} It is clear that (1) implies (2) without any assumption on $Y$.

Let us assume that $\mathcal{E}$ is stable and $ \delta(Y)=\delta(\Lambda_{\infty}(\mathcal{E}))$.  Theorem 3.1 of \cite{ND1} ensures that $\Delta(Y)=\Delta(\Lambda_{\infty}(\mathcal{E}))$. Theorem 1.2 of \cite{AKT2} yields that $\Lambda_{\infty}(\varepsilon)$ is isomorphic to a complemented subspace of $Y$. By Remark~\ref{Rem1},(4)  the assumption $(\Lambda_{\infty}(\mathcal{E}),Y,X)\in \mathfrak{TF}$ implies that $(\Lambda_{\infty}(\mathcal{E}),\Lambda_{\infty}(\mathcal{E}),X)\in \mathfrak{TF}$, then, the pair $(\Lambda_{\infty}(\mathcal{E}), X)$ is tame. Thus, we have shown that (2) implies that (3).  The reverse implication, $(3) \Rightarrow (2)$, is straightforward. The equivalence of statements (3), (4) and (5) is established by Theorem 4.10 of \cite{pisek2} and 3.2 Satz of \cite{vogt2}.

Moreover, if any of the above statements holds, then the pair $(Y,X)$ is tame. Indeed, by \cite[Theorem 4]{vogt6}, the space $Y$ is isomorphic to a quotient of $\Lambda_{\infty}(\mathcal{E})$. Therefore, if $(\Lambda_{\infty}(\mathcal{E}), X) \in \mathfrak{T}$, it follows that $(Y,X) \in \mathfrak{T}$ as well.
\end{proof}

The proof of the following theorem follows exactly the same lines as the proofs of Theorem \ref{TA2}, and is therefore omitted.

\begin{theorem}\label{TA22} Let $X$ be a Fréchet space and let $Y$ be a nuclear Fr\'echet space whose associated exponent sequence $\mathcal{E}=(\mathcal{E}_{n})_{n\in \mathbb{N}}$ is stable, and which has the properties $\underline{DN}$ and $\Omega$. Assume that $ \delta(Y)=\delta(\Lambda_{\infty}(\mathcal{E}))$.
Then the following statements are equivalent:
\begin{enumerate}
\item $(X,Y,\Lambda_{\infty}(\mathcal{E}))\in \mathfrak{BF}$,
    \item $(X,Y,\Lambda_{\infty}(\mathcal{E}))\in \mathfrak{TF}$,
    \item $(X,\Lambda_{\infty}(\mathcal{E}))\in \mathfrak{T}$,
    \item $(X,\Lambda_{\infty}(\mathcal{E}))\in \mathfrak{B}$,
    \item $X$ has $(LB^{\infty})$.
\end{enumerate}
\end{theorem}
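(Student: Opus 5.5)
We follow the same cycle of implications as in the proof of Theorem \ref{TA2}, with the roles of $X$ and $\Lambda_{\infty}(\mathcal{E})$ interchanged. The implication (1) $\Rightarrow$ (2) is immediate, since every bounded operator has bounded characteristic of continuity and is therefore tame for any $S$ dominating a constant. The equivalence of (3), (4) and (5) is exactly Theorem \ref{VP}(1), that is, \cite[5.2 Satz]{vogt2} together with \cite[Theorem 4.2]{pisek2}. Here we use that $\mathcal{E}$ is stable and hence weakly-stable. The implication (4) $\Rightarrow$ (1) is also clear: if $T=RQ$ with $Q\in L(X,Y)$ and $R\in L(Y,\Lambda_{\infty}(\mathcal{E}))$, then $T\in L(X,\Lambda_{\infty}(\mathcal{E}))$, so $T$ is bounded by (4).

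The only substantive step is (2) $\Rightarrow$ (3). The plan is to place a copy of $\Lambda_{\infty}(\mathcal{E})$ inside $Y$ as a complemented subspace and then apply Remark \ref{Rem1}(4). From $\delta(Y)=\delta(\Lambda_{\infty}(\mathcal{E}))$, \cite[Theorem 3.1]{ND1} gives $\Delta(Y)=\Delta(\Lambda_{\infty}(\mathcal{E}))$. Since $\mathcal{E}$ is stable, \cite[Theorem 1.2]{AKT2} then shows that $\Lambda_{\infty}(\mathcal{E})$ is isomorphic to a complemented subspace of $Y$. In Remark \ref{Rem1}(4) we take the triple $(X,Y,\Lambda_{\infty}(\mathcal{E}))$ and choose $X$ as a quotient of itself, $\Lambda_{\infty}(\mathcal{E})$ as the complemented subspace of the middle space, and $\Lambda_{\infty}(\mathcal{E})$ as a subspace of itself. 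This yields $(X,\Lambda_{\infty}(\mathcal{E}),\Lambda_{\infty}(\mathcal{E}))\in\mathfrak{TF}$.

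It remains to note that every $T\in L(X,\Lambda_{\infty}(\mathcal{E}))$ factors as $T=\mathrm{id}\circ T$ through $\Lambda_{\infty}(\mathcal{E})$. Hence $L^{\Lambda_{\infty}(\mathcal{E})}(X,\Lambda_{\infty}(\mathcal{E}))=L(X,\Lambda_{\infty}(\mathcal{E}))$, and the $\mathfrak{TF}$ property just obtained says precisely that $(X,\Lambda_{\infty}(\mathcal{E}))\in\mathfrak{T}$, which is (3).

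The main point needing care is that Remark \ref{Rem1}(4) requires a \emph{complemented} copy of $\Lambda_{\infty}(\mathcal{E})$ in the middle space $Y$. A mere subspace, as used in Theorem \ref{TA1-1}, would not suffice here, because $Y$ now sits in the middle of the triple and not at the end. This is exactly where the hypothesis $\delta(Y)=\delta(\Lambda_{\infty}(\mathcal{E}))$ and the stability of $\mathcal{E}$ enter, through \cite{ND1} and \cite{AKT2}. Once that input is secured, the argument is purely formal.
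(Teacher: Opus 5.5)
Your proposal is correct and is essentially the paper's argument; the paper omits the proof, saying it follows the lines of Theorem~\ref{TA2}. As there, the substantive step (2)$\Rightarrow$(3) uses \cite[Theorem 3.1]{ND1} and \cite[Theorem 1.2]{AKT2} to get a complemented copy of $\Lambda_{\infty}(\mathcal{E})$ in $Y$ and then applies Remark~\ref{Rem1}(4), while the remaining implications come from Theorem~\ref{VP}(1) and trivial inclusions. One small precision: in (1)$\Rightarrow$(2) take a single unbounded nondecreasing $S$, e.g.\ $S(k)=k$, which eventually dominates each operator's own constant bound on $\pi_T$.
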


We now consider the coincidence condition
$\delta(Y)=\delta(\Lambda_{1}(\mathcal{E}))$. 
In \cite[Theorem 3.2]{A}, Aytuna showed that  for a nuclear Fréchet space $Y$
with the properties $\underline{DN}$ and $\Omega$ and a stable, finitely
nuclear associated exponent sequence $\mathcal{E}$, $Y$ is isomorphic to a power
series space of finite type if and only if $Y$ is tame and
$\delta(Y)=\delta(\Lambda_{1}(\mathcal{E}))$. This motivates our following theorem. To establish our following result, we follow the general outline of the proof of Theorem 3.3 in Aytuna \cite{A}. For the convenience of the reader, and in order to keep the argument self-contained, we use the relevant part of Aytuna’s proof and adapt the concluding step to the present framework.

Since it is not known whether $\Lambda_{1}(\mathcal{E})$ embeds as a subspace of $Y$ under the assumption $\delta(Y)=\delta(\Lambda_{1}(\mathcal{E}))$, we rely on the following weaker notion of embedding, introduced by Aytuna, which only requires a one-sided estimate at a single pair of seminorms.

\begin{defin}[\cite{A}]\label{def:rlocal} Let $Y$ be a Fr\'echet space. 
A continuous linear operator $T:\Lambda_{1}(\alpha)\to Y$ is called an \emph{$(r,k)$-local imbedding} if
$$\hspace{1.25in}\exists\; C>0\qquad\hspace{0.15in} \|T(x)\|_{k}\geq C|x|_{r}, \qquad \hspace{0.45in}\forall\, x\in \Lambda_{1}(\alpha).$$
We say that $Y$ \emph{admits an $r$-local imbedding} from $\Lambda_{1}(\alpha)$ if there exists a $k\in \mathbb{N}$ and an $(r,k)$-local imbedding $T:\Lambda_{1}(\alpha)\to Y$.
\end{defin}

We may now state and prove our result concerning the coincidence $\delta(Y)=\delta(\Lambda_{1}(\mathcal{E}))$.

\begin{theorem}\label{T1} Let $X$ be a Fréchet space and let $Y$ be a nuclear Fr\'echet space whose associated exponent sequence $\mathcal{E}=(\mathcal{E}_{n})_{n\in \mathbb{N}}$ is stable and finitely nuclear, and which has the properties $\underline{DN}$ and $\Omega$. Assume that $ \delta(Y)=\delta(\Lambda_{1}(\mathcal{E}))$.
Then the following statements are equivalent:
\begin{enumerate}
    \item $(X,\Lambda_{1}(\mathcal{E}),Y)\in \mathfrak{TF}$,
    \item $(X,\Lambda_{1}(\mathcal{E}))\in \mathfrak{T}$,
    \item $(X,Y)\in \mathfrak{T}$,
    \item $X$ has the property $\overline{\Omega}$.
\end{enumerate}
Moreover, if $X$ has the property $\underline{DN}$ and any of the above statements holds, then  $X$ is isomorphic to a power series space of finite type.
\end{theorem}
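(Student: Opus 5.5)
The plan is to close the cycle $(4)\Rightarrow(2)\Rightarrow(1)\Rightarrow(4)$ and then attach $(3)$ to it.

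\textbf{Easy implications.}
\begin{itemize}
\item $(4)\Leftrightarrow(2)$: this is Theorem~\ref{VP}(4). Since $\mathcal{E}$ is finitely nuclear, $\Lambda_1(\mathcal{E})$ is nuclear, and stability of $\mathcal{E}$ gives weak stability, so Piszczek's characterization applies without any assumption on $X$.
\item $(2)\Rightarrow(1)$: every $T=RQ$ with $Q\in L(X,\Lambda_1(\mathcal{E}))$ has $\pi_T(k)\le \pi_Q(\pi_R(k))$. Vogt's result says each $R\in L(\Lambda_1(\mathcal{E}),Y)$ is tame, since $Y$ has $\underline{DN}$ and $(\Lambda_1(\mathcal{E}),Y)\in\mathfrak T$ by Theorem~\ref{VP}(3). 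Composing two tame families gives a single majorant $S_2\circ S_1$.
\item $(4)\Rightarrow(3)$: by Theorem~\ref{TA1}.
\item $(3)\Rightarrow(1)$: $\pi_T(k)\le\pi_R(\pi_Q(k))$ with $RQ$ in $L(X,Y)$, and $(X,Y)\in\mathfrak T$ controls every element of $L(X,Y)$, in particular every $T\in L^{\Lambda_1(\mathcal{E})}(X,Y)$.
\end{itemize}
So it remains to prove $(1)\Rightarrow(4)$ (equivalently $(1)\Rightarrow(2)$).

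\textbf{Main step, $(1)\Rightarrow(2)$.} Here we cannot simply restrict to a subspace copy of $\Lambda_1(\mathcal{E})$ in $Y$, because such an embedding is not known. Instead I follow Aytuna's argument: from $\delta(Y)=\delta(\Lambda_1(\mathcal{E}))$, nuclearity and the stability of $\mathcal{E}$, one gets, for every $r<1$ close to $1$, an $(r,k)$-local imbedding $J_r:\Lambda_1(\mathcal{E})\to Y$ in the sense of Definition~\ref{def:rlocal}. The index $k=k(r)$ can be chosen independently of $r$, or at least controlled, using the diametral estimates $d_n(U_q,U_p)$ compared with $e^{-(s-r)\mathcal{E}_n}$.

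Given $Q\in L(X,\Lambda_1(\mathcal{E}))$, consider $T_r=J_rQ\in L^{\Lambda_1(\mathcal{E})}(X,Y)$. By $(1)$, $\pi_{T_r}(m)\le S(m)$ for $m\ge m_0$. The local-imbedding inequality $|Qx|_r\le C^{-1}\|J_rQx\|_k$ then yields that $Q$ is continuous from $\|\cdot\|_{S(k)}$ into $|\cdot|_r$.

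To make this uniform and to pass from a single $r$ to a tame estimate for all levels, I argue as follows.
\begin{itemize}
\item The set $\{Q: \sup_{\|x\|_{S(k)}\le1}|Qx|_r<\infty\}$ is a countable union of closed sets in the Fréchet space $L(X,\Lambda_1(\mathcal{E}))$. Grothendieck's factorization theorem, or a Baire category argument, removes the dependence of $k_0$ on $Q$.
\item Letting $r\uparrow1$ along a sequence $r_j$ then gives $\pi_Q(j)\le S'(j)$ with $S'$ independent of $Q$, which is $(2)$.
\end{itemize}

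\textbf{Moreover part.} If $X$ has $\underline{DN}$ and $\overline{\Omega}$, then by Theorem~\ref{VP}(3),(4) both $(\Lambda_1(\mathcal{E}),X)$ and $(X,\Lambda_1(\mathcal{E}))$ are tame, but this alone does not identify $X$. I would invoke the known structure theorem (Vogt/Aytuna) that a Fréchet space with $\underline{DN}$ and $\overline{\Omega}$ which is nuclear, or here obtained via the local imbeddings, is isomorphic to a power series space of finite type $\Lambda_1(\beta)$. Alternatively, I would apply Aytuna's Theorem~3.2 criterion, tameness plus the coincidence of $\delta$, to $X$ after showing $\delta(X)=\delta(\Lambda_1(\beta))$ for its associated sequence.

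\textbf{Expected obstacle.} I expect the construction of the $r$-local imbeddings with control of $k$, together with the uniformity argument in $(1)\Rightarrow(2)$, to be the hardest step.
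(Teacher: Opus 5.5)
Your easy implications are correct:
\begin{itemize}
\item (4)$\Leftrightarrow$(2) is Piszczek's theorem; nuclearity of $\Lambda_1(\mathcal{E})$ and weak stability are exactly what it needs.
\item (2)$\Rightarrow$(1) follows by composing with $(\Lambda_1(\mathcal{E}),Y)\in\mathfrak{T}$.
\item (4)$\Rightarrow$(3) is Theorem~\ref{TA1}, and (3)$\Rightarrow$(1) is trivial.
\end{itemize}
So everything rests on (1)$\Rightarrow$(2), and there the proposal has a genuine gap.

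Hypothesis (1) is applied to a \emph{different} operator $J_rQ$ for each $r$. It therefore only gives $\pi_{J_rQ}(m)\le S(m)$ for $m\ge m_0(r,Q)$, equivalently $\pi_{J_rQ}\le S_{\alpha(r,Q)}$. Your local-imbedding inequality then yields only $\pi_Q(r)\le S(\max\{k(r),m_0(r,Q)\})$. Nothing prevents $m_0(r_j,Q)$ from outrunning $k(r_j)$ as $r_j\uparrow 1$, so letting $r\uparrow 1$ produces no tame majorant.

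What must be removed is the dependence on $r$ for a \emph{fixed} $Q$. Dependence on $Q$ is permitted by the definition of $\mathfrak{T}$, so your Baire/Grothendieck step aims at the wrong variable. It is also set in $L(X,\Lambda_1(\mathcal{E}))$, which is not a Fr\'echet space (it is not metrizable once $X$ is not normable), so Baire category is unavailable there.

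To run a Grothendieck argument over $\{J_rQ\}_{r<1}$, all these operators must lie in one Fr\'echet space. That requires a bound on $\pi_{J_r}$ uniform in $r$, and arbitrary local imbeddings from Aytuna's Theorem~2.9 give none. This is the missing idea, and the paper manufactures it as follows.
\begin{itemize}
\item It replaces $T$ by $\widehat{T}_r:\Lambda_1(\mathcal{E}_{2n})\to Y$, $e_n\mapsto e^{r\mathcal{E}_{2n}}f_n$. Here $f_n\in\mathrm{span}\{T(a_1),\dots,T(a_{2n})\}$ are orthonormal for $|\cdot|_{r_k}$ and orthogonal to $e_1,\dots,e_n$.
\item This makes $\widehat{T}_r$ isometric from $|\cdot|_r$ to $|\cdot|_{r_k}$. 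By stability it is also bounded from $|\cdot|_0$ into $|\cdot|_{s_0}$, with $s_0$ depending only on $\mathcal{E}$.
\item Vogt's convexity of characteristics between power series spaces then gives $\pi_{\widehat{T}_r}(s)\le\rho(s)$ for all $r$.
\item Hence, for fixed $L$, every $\widehat{T}_rL$ lies in the Fr\'echet space $\mathcal{F}$ of operators with $\pi_U\le\pi_L\circ\rho$.
\item Grothendieck's factorization theorem, applied to the inclusion of $\mathcal{F}$ into the LF-space $\mathcal{A}=\bigcup_k\{U:\ \pi_U\le S_k\}$, yields one $k$ with $\pi_{\widehat{T}_rL}\le S_k$ for every $r$. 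Thus $\pi_L(r)\le S_k(r_{k_r})$.
\end{itemize}

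On the \emph{moreover} part, your caveat that nuclearity is needed is justified. As stated for an arbitrary Fr\'echet $X$ the claim fails: a Banach space has $\underline{DN}$ and $\overline{\Omega}$ and trivially satisfies (2), yet is not a power series space. So some extra hypothesis on $X$, such as the nuclearity used in the structure theorem you have in mind, is required; the paper simply cites Meise--Vogt, Prop.~29.18. Your alternative ``or here obtained via the local imbeddings'' supplies nothing, since those imbeddings concern $Y$, not $X$.
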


\begin{proof} Let Y  be a nuclear Fréchet space with the properties $\underline{DN}$ and $\Omega$ and whose associated exponent sequence $\mathcal{E}$ is stable and finitely nuclear. Assume that $ \delta(Y)=\delta(\Lambda_{1}(\mathcal{E}))$. As a first step, we show that (1) implies (2). Accordingly, we assume that the triple $(X,\Lambda_{1}(\mathcal{E}),Y)$ has the tame factorization property. 
 
 By \cite[3.2 Satz]{vogt3},  $Y$ can be imbedded into $\Lambda_1(\mathcal{E})$ as a closed subspace. We therefore identify  $Y$ with its image in $\Lambda_1(\mathcal{E})$ and work with the grading on $Y$ induced by $(\Lambda_1(\mathcal{E}), \{|\cdot |_r\}_{r})$. For $r<1$, let $\Lambda_{r}[\mathcal{E}]$ be the Hilbert space defined by
\[
\Lambda_{r}[\mathcal{E}]
= \left\{ x=(x_n) : 
|x|_{r} = \left(\sum_{n=1}^{\infty} |x_n|^2 e^{2r\mathcal{E}_n}\right)^{1/2} < \infty \right\}.
\]
 
 Since $\delta(Y)=\delta(\Lambda_{1}(\mathcal{E}))$,  Theorem 2.9 in \cite{A} ensures the existence of an r-local imbedding from $\Lambda_{1}(\mathcal{E})$ into $Y$ for every  $0< r<1$. Let $0<r_{0}<1$ be fixed, and denote by $T:\Lambda_{1}(\mathcal{E}) \to Y$ a corresponding $(r_{0}, r_{k_{0}})$-local imbedding with $r_{k_{0}}<1$. Then there exists a constant $C_{0}>0$ satisfying
$$|Tx|_{r_{k_0}} \geq C_0 |x|_{r_0}$$
for all $x \in \Lambda_1(\mathcal{E})$. 

Initially, we aim to define a local isomorphism $\widehat{T} :\Lambda_{1}(\mathcal{E}_{2n}) \to Y$ as in \eqref{LI1}. To define this operator, we need to establish some preliminary notions. 

Let $e_{n}$ denote the sequence $(0,\dots,0,1,0,\dots) $ with the entry 
1 in the n-th position and zeros elsewhere for every $n\in \mathbb{N}$. We set $$a_n =\frac{e_n}{e^{r_0 \mathcal{E}_n}}$$
for every $n\in \mathbb{N}$. The sequence $\{a_{n}\}_{n\in \mathbb{N}}$ forms the canonical  orthonormal basis of $\Lambda_{r_0}[\mathcal{E}_n]$. We then define 
$$g_n=T(a_{n})$$
for every $n\in \mathbb{N}$. Since T is a local imbedding, the sequence $\{g_{n}\}_{n\in \mathbb{N}}$ is finitely linearly independent. 

We select a sequence $\{f_n\}_{n\in \mathbb{N}} \subseteq Y\subseteq\Lambda_1(\mathcal{E})$ satisfying the following conditions:
\begin{enumerate}
\item [1.] $f_n \in \text{span} \{g_1,\dots,g_{2n}\}$ for every $n\in \mathbb{N}$,
\item [2.] $\langle f_n, f_s \rangle_{r_{k_0}} = 0$ for every $1\leq s\leq n-1$ and $n\in \mathbb{N}$,
\item [3.] $\langle f_n, \epsilon_k \rangle_{r_{k_0}} = 0$ for every $1\leq k\leq n$ and  $n\in \mathbb{N}$,
\item [4.] $\langle f_n, f_n \rangle_{r_{k_0}} = 1$ for every $n\in \mathbb{N}$,
\end{enumerate}
where $\langle$, $\rangle_{r_{k_0}}$ is the inner product in $\Lambda_{r_{k_0}}[\mathcal{E}]$. This sequence exists and can be chosen inductively, since for each $n\in \mathbb{N}$, the span of  $\{g_1, \dots, g_{2n}\}$ has $2n$ dimension  while $f_n$ is subject to only  $2n-1$ linear conditions.

Each $f_n$ belongs to $\text{span} \{g_1,\dots,g_{2n}\}$, so
there exist scalars $c^{n}_{1}$,$\dots$,$c^{n}_{2n}$ such that
$$\displaystyle f_n= \sum_{i=1}^{2n} c_i^n g_i.$$ 
It then follows that
$$ 1 = |f_n|_{r_{k_0}}= \left|T\left(\sum^{2n}_{i=1} c^{n}_{i}a_{i}\right)\right|_{r_{k_{0}}} \geq C_0 \left| \sum_{i=1}^{2n} c_i^n a_i \right|_{r_{0}} = C_0 \left( \sum_{i=1}^{2n} |c_i^n|^2 \right)^{\frac{1}{2}}$$
from which we conclude 
\begin{equation}
\label{EB1} \sum_{i=1}^{2n} |c_i^n|^2 \leq \frac{1}{C^{2}_0}
\end{equation}
for every $n \in \mathbb{N}$. 
We fix an $s\in (0,1)$. For each $n\in \mathbb{N}$,
\begin{equation*}
\begin{split}
|f_n|_s & = \left| \sum_{i=1}^{2n} c_i^n g_i \right|_s \leq \sum_{i=1}^{2n} |c_i^n| |g_i|_s \leq C_1 \sum_{i=1}^{2n} |c_i| |a_n|_{\pi_T(s)} \nonumber \\
 & = C_1 \sum_{i=1}^{2n} |c_i^n| \left| \frac{e_i}{e^{r_0 \mathcal{E}_i}} \right|_{\pi_T(s)} = C_1 \sum_{i=1}^{2n} |c_i^n| e^{(\pi_T(s)-r_0)\mathcal{E}_i}
 \end{split}
\end{equation*}
for some $C_1>0$, where $\pi_T$ denotes the characteristic of continuity of $T$ with respect to the connonical gradings of $\Lambda_1(\mathcal{E}).$ Next, we choose $K(s)$ satisfying $\max \{\pi_T(s), r_0\} < K(s) < 1$, and applying the Cauchy–Schwarz inequality and using the inequality (\ref{EB1}), we obtain 
\begin{align*}
    &\sum_{i=1}^{2n} |c_i^n| \   e^{(\pi_T(s)-r_0)\mathcal{E}_i + K(s)\mathcal{E}_i - K(s)\mathcal{E}_i} \\
    &\leq  e^{(K(s)-r_0)\mathcal{E}_{2n}} \left( \sum_{i=1}^{2n} |c_i^n|^2 \right)^{\frac{1}{2}} \left( \sum_{i=1}^{2n} e^{2(\pi_T(s)-K(s))\mathcal{E}_i} \right)^{\frac{1}{2}} \\
    &\leq  C e^{(K(s)-r_0)\mathcal{E}_{2n}},
\end{align*}
for some constant $C=C(s,T)$. We note that the second factor in the second line is uniformly bounded for all n since the sequence $(\mathcal{E}_{n})_{n\in \mathbb{N}}$
is finitely nuclear. 
Hence, for every $0<s<1$, there exist constants $C=C(s,T)$ and $K(s)<1$ such that 
\begin{align}\label{E2}
    |f_n|_s \leq C e^{(K(s)-r_0)\mathcal{E}_{2n}}
\end{align}
for every $n\in \mathbb{N}$.

For each $n \in \mathbb{N}$, $f_n$ also admits the representation
$$    f_n = \sum_{k=n+1}^\infty \beta_k^n e_k $$
for some scalars $\displaystyle \{\beta_k^n\}^{\infty}_{k=n+1}$. Then, for any $-\infty<r<r_{k_0}$ and $n\in \mathbb{N}$, we have 
\begin{align*}
    |f_n|_r^2 & = \sum_{k=n+1}^\infty |\beta_k^n|^2 e^{2r\mathcal{E}_k} = \sum_{k=n+1}^\infty |\beta_k^n|^2 e^{2r_{k_0}\mathcal{E}_k} e^{2(r-r_{k_0})\mathcal{E}_k}  \\
    & \leq e^{2(r-r_{k_0})\mathcal{E}_{n+1}} |f_n|^2_{r_{k_0}} = e^{2(r-r_{k_0}) \mathcal{E}_{n+1}}.
\end{align*}
Since the sequence $(\mathcal{E}_n)_{n\in \mathbb{N}}$ is stable, there exists a $C_2>0$ such that $$C_2 \mathcal{E}_{2n} \leq \mathcal{E}_{n+1}$$ for all $n\in \mathbb{N}$. Hence, we obtain
\begin{align} \label{E1}
    |f_n|_r^2 \leq e^{2C_2(r-r_{k_0})\mathcal{E}_{2n}}
\end{align}
for every $r<r_{k_{0}}$ and $n\in \mathbb{N}$.

Let us fix an $s_0$ satisfying $-\infty < s_0 < -2/C_2$. It then follows from \eqref{E1} that, for every 
$n\in \mathbb{N}$.
\begin{align}\label{E3}
    |f_n|_{s_0} \leq e^{(C_2 s_0 - C_2 r_{k_0} + r_0)\mathcal{E}_{2n}} e^{-r_0\mathcal{E}_{2n}} \leq e^{-\mathcal{E}_{2n}} e^{-r_0\mathcal{E}_{2n}}
\end{align}
We emphasize that $s_{0}$ depends solely on the associated exponent sequence $\mathcal{E}$. We choose an increasing sequence $\{ K^+(s)\}_s$ with $K(s)<K^+(s)<1,$ for all $s<1$. For any sequence $x=(x_n)_{n\in \mathbb{N}}\in \Lambda_{1}(\mathcal{E}_{2n})$ and $s<1$, (\ref{E2}) yields
\begin{equation*}
\begin{split}
    \sum_{i=1}^\infty |x_i| |f_i|_s e^{r_0 \mathcal{E}_{2i}} &\leq C\sum^{\infty}_{i=1} |x_{i}|e^{(K(s)-r_{0})\mathcal{E}_{2i}} e^{r_{0}\mathcal{E}_{2i}} 
     ~\\ &= C\sum^{\infty}_{i=1} |x_{i}|e^{K^{+}(s)\mathcal{E}_{2i}} e^{(K(s)-K^{+}(s))\mathcal{E}_{2i}} ~\\
    &\leq \tilde{C} \left( \sum^{\infty}_{i=1} |x_i|^2 e^{2K^+(s)\mathcal{E}_{2i}} \right)^{\frac{1}{2}}=\tilde{C}|x|_{K^{+}(s)}
\end{split}
\end{equation*}
for some $\tilde{C}=\tilde{C}(s)$. This gives us that the map
\begin{equation}\label{LI1}
\begin{split}
\widehat{T}: \Lambda_1(&\mathcal{E}_{2n}) \hspace{0.05in} \longrightarrow \hspace{0.05in} Y~\\ 
&e_{n} \hspace{0.2in}\longrightarrow \hspace{0.05in} \widehat{T}(\epsilon_n)= f_n e^{r_0 \mathcal{E}_{2n}}
\end{split}
\end{equation}
defines a continuous linear operator. The operator $\widehat{T}: \Lambda_1(\mathcal{E}_{2n}) \to Y$ also satisfies
\begin{align*}
    |\widehat{T}(x)|_{r_{k_0}} &= \left|\widehat{T}\left( \sum^\infty_{i=1} x_i e_i \right)\right|_{r_{k_0}}  = \left|\sum^\infty_{i=1} x_i f_i e^{r_0 \mathcal{E}_{2i}}\right|_{r_{k_0}} \\
    & = \left( \sum^\infty_{i=1} |x_i|^2 e^{2 r_0 \mathcal{E}_{2i}} \right)^\frac{1}{2} = |x|_{r_0}
\end{align*}
for all $x \in \Lambda_1(\mathcal{E}_{2n})$.
Hence $\widehat{T}$ is an $(r_0, r_{k_0})$-local isomorphism from $\Lambda_1(\mathcal{E}_{2n})$ into $Y$.

Furthermore, $\widehat{T}$ extends continuously from $\Lambda_1(\mathcal{E}_{2n})$ into $\Lambda_{s_0}[(\mathcal{E}_{n})]$. Indeed, for every $x \in \Lambda_{0}((\mathcal{E}_{2n}))$, (\ref{E3}) gives
\begin{equation}\label{E4}
\begin{split}
    |\widehat{T}(x)|_{s_0} &= \left|\widehat{T}\left( \sum^{\infty}_{i=1} x_i \epsilon_i \right)\right|_{s_0}  = \left| \sum^\infty_{i=1} x_i f_i e^{r_0 \mathcal{E}_{2i}} \right|_{s_0} \\
    & \leq \sum^\infty_{i=1} |x_i| |f_i|_{s_0} e^{r_0 \mathcal{E}_{2i}} \leq \sum^\infty_{i=1} |x_i| e^{-\mathcal{E}_{2i}} \\
    & \leq \left( \sum^\infty_{i=1} e^{-2 \mathcal{E}_{2i}} \right)^\frac{1}{2} |x|_0.
\end{split}
\end{equation}

By considering all $0<r<1$,  we obtain a family $\{\widehat{T}_r\}$ of $(r,r_{k_r})$-local isomorphisms from $\Lambda_1\big(\mathcal{E}_{2n}\big)$ into $Y$, each of which preserves the continuity property established in (\ref{E4}).

Using the $\underline{DN}$ property of power series spaces, Vogt showed 5.1. Lemma of \cite{vogt3} that the characteristics of continuity of operators defined between power series spaces are convex. Hence, the characteristic of continuity  $\pi_{\widehat{T}_{r}}$ is convex for all $0<r<1$. It is clear that $\pi_{\widehat{T}_{r}}(t)<1$ for every $t<1$ and (\ref{E4}) gives us that $\pi_{\widehat{T}_{r}}(s_{0})=0$ and for all $0<r<1$. Therefore, for every $\displaystyle s_{0}<s<1$, we define $$\displaystyle \rho(s)=\frac{2(s-s_{0})}{1+s-2s_{0}}$$ so that for every $0< r<1$ we obtain
\begin{equation*}
\begin{split}
\pi_{\widehat{T}_{r}}(s)&\leq (1-\rho(s))\pi_{\widehat{T}_{r}}(s_{0}) +\rho(s)\pi_{\widehat{T}_{r}}\left(\frac{1+s}{2}\right)  \\
& \leq \rho(s).
\end{split}
\end{equation*}
With these preparations in place, we now prove that the pair $(X,\Lambda_{1}(\mathcal{E}_{2n}))$ is tame. Since $(X, \Lambda_{1}(\mathcal{E}), Y)$ possesses the tame factorization property, there exists a sequence $\displaystyle \{S_{k}\}^{\infty}_{k=1}$ of increasing functions from $\mathbb{N}$ into $\mathbb{N}$ such that for all $T\in L(X,Y)$ which factors over $\Lambda_{1}(\mathcal{E})$, there exists a $k\in \mathbb{N}$ such that $\pi (T)\leq S_{k}$.

Let $L$ be a continuous linear operator from $X$ into $\Lambda_{1}(\mathcal{E}_{2n})$. From the previous considerations, the family of continuous linear operators $\{\widehat{T}_{r}\circ L\}_{r<1}$ satisfies
$$\pi_{\widehat{T}_{r}\circ L}(s)\leq \pi_{L}\circ \rho(s)$$
for every $s_{0}<s<1$. Hence, the family $\{\widehat{T}_{r}\circ L\}_{r<1}$  belongs to
\[
\mathcal{F}
:=\left\{
\begin{aligned}
U\in L(X,Y):\;&
U \textnormal{ factors over } \Lambda_{1}(\mathcal{E}_{2n}) \\
&\textnormal{and }
\pi_{U}(s)\leq \pi_{L}\circ \rho(s)
\textnormal{ for all } s>s_{0}
\end{aligned}
\right\}.
\]
We consider on $\mathcal{F}$ the topology induced by the family of seminorms $$ \displaystyle \{\|\cdot\|^{\pi_{L}\circ \rho(s)}_{s}\}_{s>s_{0}}.$$
With respect to this topology,  $\mathcal{F}$ becomes a Fr\'echet space. Let
$$\mathcal {A}:=\{ U\in L(X,Y): \textnormal{U can be factored over}\;\;\Lambda_{1}(\mathcal{E}_{2n}) \}.$$
Clearly, $\mathcal{F}\subseteq \mathcal{A}$.
Since $(X,\Lambda_{1}(\mathcal{E}_{2n}), Y)$ has tame factorization property, then we may write
$$\mathcal{A}=\bigcup_{k=1}^{\infty} \bigcap_{s=1}^{\infty} \{ U\in L(X,Y):  \textnormal{U can be factored over} \hspace{0.05in} \Lambda_{1}(\mathcal{E}_{2n}) \hspace{0.05in} \textnormal{and} \hspace{0.05in} \|U\|^{ S_{k}(s) }_{s}<+\infty\}$$
This representation allows us to endow $\mathcal{A}$ with a natural topology, making it an LF-space by considering the semi-norms $\displaystyle \|\cdot\|^{S_{k}(s)}_{s}$.

Let us assume that a sequence $\displaystyle {\left(U_{n}\right)_{n\in \mathbb{N}}} \subseteq \mathcal{F}$ converges both to $U_{1}\in \mathcal{F}$ and $U_{2}\in \mathcal{A}$. Then, for every $\varepsilon>0$, there exists $k\in \mathbb{N}$ such that for all $s>s_{0}$,
$$\|U_{n}-U_{2}\|^{S_{k}(s)}_{s}\leq \frac{\varepsilon}{2}$$
and 
$$ \|U_{n}-U_{1}\|^{\pi_{L} \circ \rho(s)}_{s}\leq \frac{\varepsilon}{2}. $$
Consequently, for every $s>s_{0}$ and for every $x\in X$, we have
~\\
\begin{equation*}
\begin{split}
\|U_{2}(x)-U_{1}(x)\|_{s}&\leq \|U_{2}(x)-U_{n}(x)\|_{s}+ \|U_{1}(x)-U_{n}(x)\|_{s} \\ \\
&\leq \|U_{2}-U_{n}\|^{S_{k}}_{s}\|x\|_{S_{k}}+\|U_{1}-U_{n}\|^{\pi_{L}\circ \rho(s)}_{s}\|x\|_{\pi_{L}\circ \rho(s)}\\ \\
&\leq \varepsilon \max\left(\|x\|_{S_{k}},\|x\|_{\pi_{L}\circ \rho(s)}\right).
\end{split}
\end{equation*}
Since $\varepsilon >0$ is arbitrary, it follows that $U_{2}=U_{1}$. Hence, the inclusion $\mathcal{F}\subseteq \mathcal{A}$ has a sequentially closed graph. By Grothendieck Factorization Theorem (\cite{K}, p.18), there exists a $k\in \mathbb{N}$ such that 
$$\displaystyle \mathcal{F}\subseteq \displaystyle \bigcap_{s=1}^{\infty}\{ U\in L(X,Y):  \textnormal{U can be factored over} \hspace{0.1in} \Lambda_{1}(\mathcal{E}_{2n}) \hspace{0.05in} \textnormal{and} \hspace{0.05in} \|U\|^{ S_{k}(s) }_{s}<+\infty\}. $$
Hence, for this $k\in\mathbb{N}$, we have
$$\displaystyle \pi_{\widehat{T}_{r}\circ L}\leq S_{k}.$$
Since $\widehat{T}_{r}$ is $(r,r_{k_{r}})$-local isomorphism, there exists a $\overline{C}>0$ such that for every $x\in X$
$$\|Lx\|_{r}=\|\widehat{T}_{r}(Lx)\|_{r_{k_{r}}}\leq \overline{C} \|x\|_{S_{k}(r_{k_{r}})}.$$
If we define $\widehat{S}_{k}(r)=S_{k}(r_{k_{r}})$ for all $0<r<1$, the above argument shows that for every $L\in L(X,\Lambda_{1}(\mathcal{E}_{2n}))$, there exists a $k\in \mathbb{N}$ such that
$$\pi_{L}(r)\leq \widehat{S}_{k}(r)$$
for every $r<1$.
Therefore, the pair $(X, \Lambda_{1}(\mathcal{E}_{2n}))$ is tame. 
Since $\mathcal{E}$ is stable, it follows that the pair $(X, \Lambda_{1}(\mathcal{E}_{n}))$ is also tame. Thus, we have shown that (1) implies (2). 

Next, since $Y$ is isomorphic to a closed subspace of $\Lambda_{1}(\mathcal{E}_{n})$, the tameness of the pair $(X, \Lambda_{1}(\mathcal{E}_{n}))$ implies that $(X, Y)$ is tame. This establishes that (2) implies (3).

The implication (3) implies (1) is straightforward, and the equivalence between (2) and (4) follows from Theorem 4.1 of \cite{pisek2}.    

Moreover, if $X$ has the property $\underline{DN}$ and  any of the equivalent statements holds, then $X$ is isomorphic to a power series space of finite type by Proposition 29.18 of \cite{vogt4}.
\end{proof}

\section*{Acknowledgments}
This work has been supported by Fatih Sultan Mehmet Vakıf University Research Projects
Coordination Unit under grant number 26FSMBC1FB008.

\end{document}